\documentclass[11pt, a4paper, oneside]{amsart}

\usepackage{slashed}
\usepackage{amsthm}
\usepackage{amsmath,amscd}
\usepackage{hyperref}
\usepackage{amssymb}

\theoremstyle{plain}
\newtheorem{thm}{Theorem}[section] 
\newtheorem{lem}[thm]{Lemma} 
\newtheorem{prop}[thm]{Proposition} 
\newtheorem{cor}[thm]{Corollary} 
\theoremstyle{definition} 
\newtheorem{defn}[thm]{Definition}

\newtheorem{exmp}[thm]{Example}
\newtheorem{cond}[thm]{Condition}
\newtheorem*{ack}{Acknowledgements}
\theoremstyle{remark}

\DeclareMathOperator{\Hol}{Hol}

\DeclareMathOperator{\GL}{GL}
\DeclareMathOperator{\SL}{SL}

\DeclareMathOperator{\re}{Re}

\DeclareMathOperator{\Proj}{Proj}
\DeclareMathOperator{\Aut}{Aut}

\DeclareMathOperator{\Hom}{Hom}
\DeclareMathOperator{\linspan}{span}

\DeclareMathOperator{\Ad}{Ad}

\newcommand{\bbR}{\mathbb{R}}
\newcommand{\bbC}{\mathbb{C}}

\newcommand{\bbZ}{\mathbb{Z}}
\newcommand{\bbP}{\mathbb{P}}

\newcommand{\bbQ}{\mathbb{Q}}
\newcommand{\imi}{i}
\newcommand{\rd}{\mathrm{d}}


\begin{document}
\title{New Examples of Compact Manifolds with Holonomy $\mathrm{Spin}(7)$}
\author{Robert Clancy}
\email{clancy@maths.ox.ac.uk}

\begin{abstract}
    We find new examples of compact $\mathrm{Spin}(7)$-manifolds using a construction of Joyce \cite{Joyce:1999fk,joyce2000compact}. 
    The essential ingredient in Joyce's construction is a Calabi--Yau 4-orbifold with particular singularities admitting an antiholomorphic involution, which fixes the singularities. 
    We search the class of well-formed quasismooth hypersurfaces in weighted projective spaces for suitable Calabi--Yau 4-orbifolds.
    We find that different hypersurfaces within the same family of Calabi--Yau 4-orbifolds may result in different $\mathrm{Spin}(7)$-manifolds.  
\end{abstract}

\maketitle
\section{Introduction}

The holonomy group of a connected Riemannian manifold is the group of parallel transport maps around piecewise smooth loops based at a point.
Berger \cite{Berger:1955fk}  classified the possible holonomy groups of irreducible, nonsymmetric Riemannian metrics on simply-connected manifolds in 1955. 

\begin{thm}[Berger]
Suppose $M$ is a simply-connected manifold and $g$ is a Riemannian metric on $M$, which is irreducible and nonsymmetric. Then one of the following cases holds:
\begin{itemize}
\item[(i)] $\Hol(g)=\mathrm{SO}(n)$,
\item[(ii)] $n=2m$ with $m\geq 2$, and $\Hol(g)=\mathrm{U}(m)$ in $\mathrm{SO}(2m)$,
\item[(iii)] $n=2m$ with $m\geq 2$, and $\Hol(g)=\mathrm{SU}(m)$ in $\mathrm{SO}(2m)$,
\item[(iv)] $n=4m$ with $m\geq2$, and $\Hol(g)=\mathrm{Sp}(m)$ in $\mathrm{SO}(4m)$,
\item[(v)] $n=4m$ with $m\geq 2$, and $\Hol(g)=\mathrm{Sp}(m)\mathrm{Sp}(1)$ in $\mathrm{SO}(4m)$,
\item[(vi)] $n=7$ and $\Hol(g)= G_2$ in $\mathrm{SO}(7)$, or
\item[(vii)] $n=8$ and $\Hol(g)=\mathrm{Spin}(7)$ in $\mathrm{SO}(8)$.
\end{itemize}
\end{thm}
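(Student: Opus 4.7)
The plan is to use the Ambrose--Singer holonomy theorem to reduce the problem to a purely algebraic classification, and then to sift the resulting Lie-algebraic data against curvature constraints. Since $M$ is simply-connected, the holonomy group $H = \Hol(g)$ is connected and closed in $\mathrm{SO}(n)$. By the de Rham decomposition theorem, irreducibility of $g$ translates into irreducibility of the standard representation of $H$ on $T_pM \cong \bbR^n$, because a $H$-invariant subspace would integrate to a local product decomposition. So the task becomes: classify those connected closed subgroups $H \subset \mathrm{SO}(n)$ acting irreducibly on $\bbR^n$ which can arise as the holonomy group of a nonsymmetric Riemannian metric.

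First I would enumerate all connected closed subgroups $H \subset \mathrm{SO}(n)$ acting irreducibly on $\bbR^n$. This is a problem in compact Lie group representation theory: one takes each compact connected Lie group with a faithful real irreducible representation and identifies its image in $\mathrm{O}(n)$ (automatically landing in $\mathrm{SO}(n)$ by connectedness). The answer is finite in each dimension and classical, obtained by combining the classification of real-type irreducible representations of compact simple groups with the possibilities for semisimple factors acting through tensor products, and for reductive groups acting through a central $\mathrm{U}(1)$ or $\mathrm{Sp}(1)$.

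For each candidate $\mathfrak{h} \subset \mathfrak{so}(n)$ I would then exploit the constraints imposed on the Riemann tensor. By Ambrose--Singer, $\mathfrak{h}$ is spanned by the curvature operators $R(X,Y)$, so $R$ at each point lies in $\Lambda^2 T_p^*M \otimes \mathfrak{h}$. The first Bianchi identity further restricts $R$ to the space $\mathcal{K}(\mathfrak{h})$ of algebraic curvature tensors with values in $\mathfrak{h}$, and the second Bianchi identity forces $\nabla R$ to lie in a derived subspace $\mathcal{K}^1(\mathfrak{h}) \subset T_p^*M \otimes \mathcal{K}(\mathfrak{h})$. Three outcomes are then possible for each candidate. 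If $\mathcal{K}(\mathfrak{h}) = 0$ then $R \equiv 0$, contradicting that $H$ is nontrivial. If $\mathcal{K}(\mathfrak{h}) \neq 0$ but $\mathcal{K}^1(\mathfrak{h}) = 0$ then $\nabla R \equiv 0$, so $g$ is locally symmetric, and simple-connectedness upgrades this to symmetric, contradicting the hypothesis. The surviving cases must appear on Berger's list, and one verifies conversely that each of the seven listed groups does occur by exhibiting nontrivial elements of $\mathcal{K}^1(\mathfrak{h})$.

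The main obstacle is the case-by-case computation of $\mathcal{K}(\mathfrak{h})$ and $\mathcal{K}^1(\mathfrak{h})$ for the many candidate representations produced at the classification step. Each individual computation reduces, via Schur's lemma, to a highest-weight decomposition of $\mathrm{Sym}^2(\mathfrak{h}) \cap \ker(\text{Bianchi})$, and is tractable once the irreducible summands of $\mathfrak{h} \otimes \mathfrak{h}$ are known; but there are many candidates to treat, some with subtle kernels. Historically this is precisely where Berger's original argument was incomplete: the case $\mathrm{Spin}(9) \subset \mathrm{SO}(16)$ was later shown by Alekseevskii, and independently Brown--Gray, to force the locally symmetric quaternionic projective plane metric, and so is rightly absent from the list above. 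Verifying that the seven surviving groups all admit nontrivial non-symmetric curvature is the easier half of the argument; existence of genuine Riemannian manifolds realising them (especially the exceptional cases (vi) and (vii)) is a much deeper question addressed only much later by Bryant, Salamon, Joyce and others.
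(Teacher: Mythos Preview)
The paper does not prove this theorem; it is quoted as background with a citation to Berger's 1955 paper and no argument is supplied. There is therefore nothing in the paper to compare your proposal against.

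That said, your outline is the standard modern route to Berger's list and is essentially correct as a sketch: reduce via Ambrose--Singer and de Rham to an algebraic problem about connected irreducible subgroups $H\subset\mathrm{SO}(n)$, then eliminate candidates by computing the spaces $\mathcal{K}(\mathfrak{h})$ and $\mathcal{K}^1(\mathfrak{h})$ of algebraic curvature tensors and their covariant derivatives satisfying the Bianchi identities. Your remark about $\mathrm{Spin}(9)$ being excised only later by Alekseevskii and Brown--Gray is historically accurate and worth keeping. The one caveat is that what you have written is a plan rather than a proof: the actual enumeration of irreducible subgroups and the case-by-case curvature computations are substantial, and a full treatment would either carry them out or cite where they are done (e.g.\ Simons, Schwachh\"ofer, or the exposition in Besse or Salamon). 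For the purposes of this paper, simply citing Berger (with the later correction) is what the author does and is entirely appropriate.
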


The question of whether there existed manifolds with holonomy group $G_2$ or $\mathrm{Spin}(7)$ would not be resolved for more than 30 years.
Bryant \cite{Bryant:1987uq} in 1987 used the theory of exterior differential systems to show the existence of many metrics with holonomy $G_2$ and $\mathrm{Spin}(7)$ on small balls in $\bbR^7$ and $\bbR^8$, respectively. Then Bryant and Salamon 
\cite{Bryant:1989kx} constructed examples of complete metrics with holonomy $G_2$ and $\mathrm{Spin}(7)$ on non-compact manifolds, which were vector bundles over manifolds of dimensions 3 and 4. In 1994--5 Joyce  \cite{Joyce:1996gt,Joyce:1996st} constructed examples of compact manifolds with holonomy $G_2$ and $\mathrm{Spin}(7)$ by resolving quotients of tori by finite groups.

Joyce \cite{Joyce:1999fk} gives a second construction of manifolds with holonomy $\mathrm{Spin}(7)$ whose basic ingredient is a Calabi--Yau 4-orbifold with an antiholomorphic involution. The exact conditions on the Calabi--Yau 4-orbifold are stated in Condition \ref{cond:Y}.
In this thesis we will find all examples of suitable Calabi--Yau 4-orbifolds arising as well-formed quasismooth hypersurfaces in weighted projective spaces. We will then find the Betti numbers of the $\mathrm{Spin}(7)$-manifolds, which result from the construction given in \cite{Joyce:1999fk}. 

\begin{ack}
    I would like to thank my supervisor Dominic Joyce for encouragement, guidance and help. 
\end{ack}

\section {Review of $\mathrm{Spin}(7)$ geometry}
The material of this section is entirely from \cite{joyce2000compact}. We will recall the basic definitions and properties of Riemannian holonomy groups and then discuss the group $\mathrm{Spin}(7)$.
In this section $M$ will denote a connected manifold.

\begin{defn}
Let $E$ be a vector bundle over $M$, and $\nabla^E$ a connection on $E$. Let $p\in M$ be a point. We say $\gamma$ is a \emph{loop based at $p$} if $\gamma:[0,1]\rightarrow M$ is a piecewise-smooth curve with $\gamma(0)=\gamma(1)=p$. If $\gamma$ is a loop based at $p$, then the parallel transport map $P_\gamma:E_p\rightarrow E_p$ is an invertible linear map. Define the \emph{holonomy group $\Hol_p(\nabla^E)$ of $\nabla^E$ based at $p$} to be
\[
\Hol_p(\nabla^E)=\{P_\gamma:\text{ $\gamma$ is a loop based at $p$}\}\subset \GL(E_p).
\]
\end{defn}

Since $M$ is connected, $\Hol_p(\nabla^E)$ and $\Hol_q(\nabla^E)$ are conjugate as subgroups of $\GL(k,\bbR)$, if $k$ is the rank of $E$ and we have chosen identifications $E_p\simeq \bbR^k\simeq E_q$. We write $\Hol(\nabla^E)$ to mean this conjugacy class of subgroups of $\GL(k,\bbR)$. The following proposition is a very useful property of holonomy groups.

\begin{prop}
    \label{prop:ParallelSectionsActionOfHolonomy}
Let $M$ be a manifold, $E$ a vector bundle over $M$, and $\nabla^E$ a connection on $E$. Let $p\in M$ be a point. Then the parallel sections of $E$ are in one-to-one correspondence with the fixed points of the action of $\Hol_p(\nabla^E)$ on $E_p$.
\end{prop}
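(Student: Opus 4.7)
The plan is to exhibit an explicit bijection $\Phi$ between parallel sections and $\Fix(\Hol_p(\nabla^E))$, defined in one direction by $s\mapsto s(p)$.

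For the forward direction, suppose $s$ is a parallel section and $\gamma$ is a loop based at $p$. Then $t\mapsto s(\gamma(t))$ is the unique parallel lift of $\gamma$ starting at $s(p)$, since $\nabla^E s=0$ restricted to $\gamma$ gives the parallel transport ODE. Evaluating at $t=1$ yields $P_\gamma(s(p))=s(p)$, so $s(p)\in\Fix(\Hol_p(\nabla^E))$.

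For the inverse direction, start with $v\in E_p$ fixed by $\Hol_p(\nabla^E)$ and attempt to define a section by $s(q)=P_\gamma(v)$ for any piecewise-smooth path $\gamma$ from $p$ to $q$ (such paths exist since $M$ is connected). Well-definedness is immediate from the fixed-point hypothesis: if $\gamma_1,\gamma_2$ are two such paths, then the concatenation $\gamma_2^{-1}\cdot\gamma_1$ is a loop based at $p$, and
\[
P_{\gamma_2}^{-1}\circ P_{\gamma_1}(v)=P_{\gamma_2^{-1}\cdot\gamma_1}(v)=v,
\]
so $P_{\gamma_1}(v)=P_{\gamma_2}(v)$.

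The main obstacle is verifying that this pointwise recipe actually produces a \emph{smooth} section which is moreover parallel, since the definition superficially depends on path choices. I would handle this locally: around any $q\in M$, choose a neighbourhood $U$ small enough to be the diffeomorphic image under the exponential map of a star-shaped neighbourhood of the origin, and fix one path $\gamma_0$ from $p$ to $q$. For $q'\in U$ let $\sigma_{q'}$ be the radial geodesic from $q$ to $q'$, which depends smoothly on $q'$. Then for any $q'\in U$ we have
\[
s(q')=P_{\gamma_0\cdot\sigma_{q'}}(v)=P_{\sigma_{q'}}\bigl(P_{\gamma_0}(v)\bigr)=P_{\sigma_{q'}}(s(q)).
\]
Working in a local trivialisation of $E$ over $U$, the map $q'\mapsto P_{\sigma_{q'}}(s(q))$ is obtained by solving the parallel-transport ODE along a smoothly varying family of curves, so smooth dependence on parameters yields smoothness of $s$ on $U$. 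The same local expression shows $\nabla^E s=0$ along every radial geodesic out of $q$, and since these directions span $T_qM$ we conclude $\nabla^E s=0$ at $q$. The two constructions $s\mapsto s(p)$ and $v\mapsto s$ are manifestly inverse to one another, which finishes the proof.
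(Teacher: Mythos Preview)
Your proof is correct and is the standard argument. However, the paper does not actually supply a proof of this proposition: it is stated as a background fact drawn from \cite{joyce2000compact}, with no argument given. So there is no paper proof to compare against.

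One very minor remark on your write-up: invoking the exponential map and radial geodesics tacitly uses an auxiliary Riemannian metric on $M$, which the proposition does not assume. This is harmless since every manifold admits one, but you could equally (and slightly more directly) take $U$ to be a coordinate ball and use straight-line paths in the chart; the smooth-dependence-on-parameters argument for the parallel-transport ODE goes through identically.
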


If $(M,g)$ is a Riemannian manifold we define the holonomy group of $g$, $\Hol(g)$, to be the holonomy group of the Levi-Civita connection of $(M,g)$.
%
%
%
%
%
%
Note that the holonomy group of a Riemannian manifold comes equipped with a representation on the fibres of the tangent bundle. Therefore when we say that a manifold has holonomy $\mathrm{Spin}(7)$ we must also say what representation of $\mathrm{Spin}(7)$ we are considering. 

$\mathrm{Spin}(7)$ can be defined as the simply-connected double cover of $SO(7)$.  We however will define it as the stabiliser group of a certain 4-form on $\bbR^8$, which will determine an embedding of $\mathrm{Spin}(7)$ in $\GL(8,\bbR)$ and hence the irreducible 8-dimensional representation, to which Berger's theorem refers.

\begin{defn}
Let $\bbR^8$ have coordinates $(x_1,\dotsc, x_8)$. Let $\rd x_{ijkl}$ denote the 4-form $\rd x_i\wedge \rd x_j\wedge \rd x_k\wedge \rd x_l$. We define the \emph{Cayley form}, $\Omega_0$, by
\begin{equation*}
\begin{split}
\Omega_0&=\rd x_{1234}+\rd x_{1256}+\rd x_{1278}+\rd x_{1357}-\rd x_{1368}-\rd x_{1458}-\rd x_{1467} \\
&+\rd x_{5678}+\rd x_{3478}+\rd x_{3456}+\rd x_{2468}-\rd x_{2457}-\rd x_{2367}-\rd x_{2358}
\end{split}
\end{equation*}
 $\mathrm{Spin}(7)$ is the subgroup of $\GL(8,\bbR)$ preserving $\Omega_0$.
\end{defn}

The 4-form above can be motivated by the structure of the octonions. The relationship between the octonions and the Cayley form can be found in, for example, \cite[Section IV.1.C]{Harvey:1982fk}. It should be noted that the Cayley form given above differs from that in \cite{Harvey:1982fk} by an orientation-preserving permutation of the coordinates and an overall change in sign.

Since we have defined $\mathrm{Spin}(7)$ as the stabilizer group of the Cayley form by Proposition \ref{prop:ParallelSectionsActionOfHolonomy}, if $(M,g)$ is an oriented Riemannian $8$-manifold with $\Hol(g)\subseteq \mathrm{Spin}(7)$, then $M$ admits a (not necessarily unique) parallel 4-form $\Omega$ such that for any $p\in M$ there exists an oriented isometry $T_pM\rightarrow \bbR^8$, which takes $\Omega_p$ to $\Omega_0$. 

We will define a $\mathrm{Spin}(7)$-manifold to include a choice of Cayley form. This fixes a particular embedding of $\Hol(g)\subseteq \mathrm{Spin}(7)$.


 
 
 
 \begin{defn}
A \emph{$\mathrm{Spin}(7)$-manifold} is a triple $(M,\Omega,g)$ where $(M,g)$ is an oriented Riemannian $8$-manifold, $\Hol(g)\subseteq \mathrm{Spin}(7)$ and $\Omega$ is a parallel 4-form such that for any $p\in M$ there exists an oriented isometry $T_pM\rightarrow \bbR^8$, which takes $\Omega_p$ to $\Omega_0$. 
 \end{defn}
 
 
We can break up the condition of being a $\mathrm{Spin}(7)$-manifold into a topological one, namely the existence of a reduction of the structure group of $TM$ to $\mathrm{Spin}(7)$, and an integrability condition on this reduction.

\begin{defn}
Let $M$ be an oriented manifold $8$-manifold. A \emph{$\mathrm{Spin}(7)$-structure on $M$} is a pair $(\Omega,g)$ where $g$ is a Riemannian metric and for any $p\in M$ there exists an oriented isometry $T_pM\rightarrow \bbR^8$, which takes $\Omega_p$ to $\Omega_0$.
\end{defn}

A $\mathrm{Spin}(7)$-structure is equivalent to a reduction of the structure group of $TM$ to $\mathrm{Spin}(7)$.
The existence of a $\mathrm{Spin}(7)$-structure is a topological property of $M$ as the following result from \cite[Th. 10.7]{Lawson:1989fk} shows.

\begin{prop}
    Let $M$ be an oriented $8$-manifold. $M$ admits a $\emph{Spin}(7)$-structure  if and only if $w_2(M)=0$ and 
    \begin{equation*}
        p_1(M)^2-4p_2(M)+ 8\chi(M)=0.
    \end{equation*}
\end{prop}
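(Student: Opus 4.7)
The plan is to reinterpret a $\mathrm{Spin}(7)$-structure on $M$ as a two-stage reduction of its oriented frame bundle and translate each stage into a characteristic-class condition via classical obstruction theory. The key observation is that the inclusion $\mathrm{Spin}(7) \hookrightarrow \mathrm{SO}(8)$ arising from the $8$-dimensional real spin representation factors as $\mathrm{Spin}(7) \hookrightarrow \mathrm{Spin}(8) \twoheadrightarrow \mathrm{SO}(8)$, where the first map realises $\mathrm{Spin}(7)$ as the stabiliser of a unit vector in the half-spin representation $\Delta^+ \cong \bbR^8$. Since $-1 \in \mathrm{Spin}(8)$ acts as $-1$ on $\Delta^+$, this stabiliser meets the kernel of $\mathrm{Spin}(8)\to \mathrm{SO}(8)$ trivially, and $\mathrm{Spin}(8)/\mathrm{Spin}(7) \cong S^7$. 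Hence a $\mathrm{Spin}(7)$-structure on $M$ amounts to a spin structure on $M$ together with a nowhere-vanishing section of the associated rank 8 real half-spinor bundle $S^+$.

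The existence of a spin structure on the oriented manifold $M$ is controlled by the single obstruction $w_2(M) \in H^2(M;\bbZ/2)$. Assuming one exists, the existence of a nowhere-vanishing section of $S^+$ is controlled by obstruction theory for sections of a rank 8 oriented real bundle over an 8-manifold: because the fibre sphere $S^7$ is $6$-connected, the only obstruction is the primary one in $H^8(M;\bbZ)$, namely the Euler class $e(S^+)$. It therefore suffices to compare $e(S^+)$ with $p_1(M)^2 - 4\,p_2(M) + 8\,\chi(M)$ in top cohomology; since $H^8(M;\bbZ) \cong \bbZ$ is torsion-free, a rational identification will suffice.

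The heart of the argument is this Chern-root computation. By the splitting principle, let the formal Pontryagin roots of $TM$ be $x_i^2$ for $i=1,\ldots,4$, so that $\chi(M) = x_1 x_2 x_3 x_4$. The weights of $\Delta^+$ are $\tfrac12(\epsilon_1 x_1 + \epsilon_2 x_2 + \epsilon_3 x_3 + \epsilon_4 x_4)$ ranging over sign patterns with $\epsilon_1\epsilon_2\epsilon_3\epsilon_4 = +1$, and picking one root from each $\pm$ pair gives four roots $\alpha_j$ whose product is $e(S^+)$. Grouping the product as $(\alpha_1\alpha_4)(\alpha_2\alpha_3)$ in terms of the sums $x_1 \pm x_4$ and $x_2 \pm x_3$ and expanding symmetrises the answer to
\[
16\,e(S^+) = p_1(M)^2 - 4\,p_2(M) + 8\,\chi(M).
\]
Vanishing of the right-hand side is then equivalent to $e(S^+) = 0$, and combined with the spin condition this yields both directions of the proposition.

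The main obstacle is the bookkeeping in this symmetric-function computation, where the mixed expressions in the $x_i$ must be shown to collapse into the symmetric functions $p_1,p_2,\chi$; once this is in hand, the obstruction-theoretic framework is routine.
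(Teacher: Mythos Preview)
The paper does not prove this proposition at all; it simply quotes it from Lawson--Michelsohn \cite[Th.~10.7]{Lawson:1989fk}. Your argument is correct and is essentially the one given in that reference: factor the inclusion $\mathrm{Spin}(7)\hookrightarrow\mathrm{SO}(8)$ through $\mathrm{Spin}(8)$, identify a $\mathrm{Spin}(7)$-structure with a spin structure plus a nowhere-zero section of the real rank-$8$ bundle $S^+$, and then compute the sole obstruction $e(S^+)\in H^8(M;\bbZ)$ by restricting to a maximal torus. The weight computation you sketch does collapse as claimed; with the obvious labelling one has
\[
(\alpha_1\alpha_4)(\alpha_2\alpha_3)=\tfrac{1}{16}\bigl((x_1+x_4)^2-(x_2+x_3)^2\bigr)\bigl((x_1-x_4)^2-(x_2-x_3)^2\bigr),
\]
and expanding gives $\sum x_i^4-2\sum_{i<j}x_i^2x_j^2+8x_1x_2x_3x_4=p_1^2-4p_2+8\chi$. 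The only implicit hypothesis worth flagging is that $M$ be closed and connected, so that $H^8(M;\bbZ)\cong\bbZ$ is torsion-free and the rational identity pins down $e(S^+)$ integrally; the sign ambiguity in choosing one weight from each $\pm$-pair corresponds to the orientation of $S^+$ and is irrelevant to the vanishing statement.
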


For $M$ to be a $\mathrm{Spin}(7)$-manifold it must satisfy an extra integrability condition as the following proposition shows from \cite[Prop. 10.5.3]{joyce2000compact}.

\begin{prop}
    Let $M$ be an  oriented $8$-manifold with $\mathrm{Spin}(7)$-structure $(\Omega,g)$. Then $\Hol(g)\subseteq \mathrm{Spin}(7)$ and $\Omega$ is the induced $4$-form if and only if $\rd\Omega=0$. In this case we say the $\mathrm{Spin}(7)$-structure $(\Omega,g)$ is \emph{torsion-free}.
\end{prop}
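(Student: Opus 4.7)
The plan is to handle the two directions separately; the forward direction is essentially immediate, and all the content is in the converse.

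For $(\Rightarrow)$: if $\Hol(g) \subseteq \mathrm{Spin}(7)$ and $\Omega$ is the induced $4$-form, then by definition $\Omega$ corresponds under Proposition \ref{prop:ParallelSectionsActionOfHolonomy} to the $\mathrm{Spin}(7)$-invariant tensor $\Omega_0$, and is therefore parallel with respect to the Levi-Civita connection: $\nabla\Omega = 0$. Closedness $\rd\Omega = 0$ then follows because $\rd$ is the antisymmetrisation of $\nabla$ on forms.

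For $(\Leftarrow)$: the strategy is representation-theoretic. Since $\mathrm{Spin}(7) \subset \mathrm{SO}(8)$, any torsion-free connection preserving $\Omega$ is automatically metric-compatible, hence equal to the Levi-Civita connection, so it suffices to show $\nabla\Omega = 0$. The obstruction is the intrinsic torsion $\tau \in \Gamma(T^*M \otimes \mathfrak{m})$, where $\mathfrak{m}$ is the orthogonal complement of $\mathfrak{spin}(7) \subset \mathfrak{so}(8)$; a dimension count ($28-21=7$) identifies $\mathfrak{m}$ as a $\mathrm{Spin}(7)$-module with the irreducible $7$-dimensional vector representation of $\mathrm{SO}(7)$.

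The key step is to decompose both $\tau$ and $\rd\Omega$ under $\mathrm{Spin}(7)$ and compare. One has $T^*M \otimes \mathfrak{m} \cong \mathbf{8} \otimes \mathbf{7} \cong \mathbf{8} \oplus \mathbf{48}$ and $\Lambda^5 T^*M \cong \mathbf{8} \oplus \mathbf{48}$ (the latter via the Hodge star from $\Lambda^3$). The assignment $\tau \mapsto \rd\Omega$ is canonical and hence $\mathrm{Spin}(7)$-equivariant, so by Schur's lemma it preserves the decomposition and acts on each irreducible summand by a scalar. To conclude, I would verify that both scalars are nonzero by a direct calculation at a single point in $\bbR^8$: deform $\Omega_0$ by conjugating with a one-parameter family of $\GL(8,\bbR)$-elements lying in the chosen irreducible component of $\mathbf{8}\otimes\mathbf{7}$, and compute the leading-order contribution to $\rd\Omega$.

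The main obstacle is exactly this nondegeneracy check: it is conceptually easy because there are only two Schur scalars to compute, but it requires an unambiguous bookkeeping of the $\mathrm{Spin}(7)$-module structure of $\Lambda^*(\bbR^8)^*$ and an explicit expression for the map $\nabla\Omega \mapsto \rd\Omega$ in terms of $\tau$. Once both scalars are shown to be nonzero, $\rd\Omega = 0$ forces $\tau = 0$, hence $\nabla\Omega = 0$, and the forward direction then yields $\Hol(g) \subseteq \mathrm{Spin}(7)$ with $\Omega$ the induced parallel $4$-form.
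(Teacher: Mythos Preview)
The paper does not prove this proposition: it is quoted from \cite[Prop.~10.5.3]{joyce2000compact} and stated without argument. So there is nothing in the paper to compare your attempt against directly.

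That said, your outline is the standard proof, essentially the one given by Fern\'andez and reproduced in Joyce's book. The forward direction is fine. For the converse, your identification of the intrinsic torsion space $T^*M\otimes\mathfrak m\cong\mathbf 8\otimes\mathbf 7\cong\mathbf 8\oplus\mathbf 48$ and of $\Lambda^5 T^*M\cong\mathbf 8\oplus\mathbf 48$ is correct, and the Schur-lemma reduction to checking two nonzero scalars is exactly the right strategy. You correctly flag the remaining work: verifying both scalars are nonzero. In practice this is done either by the explicit perturbation you describe, or more cleanly by writing $\nabla\Omega$ in terms of $\tau$ via $\nabla_a\Omega_{bcde}=\tau_a^{\ f}\cdot(\partial_f\lrcorner\,\Omega_0)_{bcde}$ (schematically) and antisymmetrising; the resulting linear map $\mathbf 8\oplus\mathbf 48\to\mathbf 8\oplus\mathbf 48$ can be checked to be invertible by evaluating on a single nonzero element in each summand. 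Your sketch is correct as far as it goes; the only gap is that the nondegeneracy computation is promised rather than performed, but you are upfront about this.
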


The construction of Joyce uses a Calabi--Yau 4-orbifold to construct a $\mathrm{Spin}(7)$-manifold. Any Calabi--Yau 4-fold carries an $S^1$ of torsion-free $\mathrm{Spin}(7)$-structures as we will soon see. We will now define $\mathrm{SU}(4)$ as the stabiliser group of a set of tensors and show that $\mathrm{SU}(4)$ embeds into $\mathrm{Spin}(7)$.

$\mathrm{SU}(4)$ can be defined as the stabiliser of a metric, a K\"ahler form $\omega_0$ and holomorphic volume form $\theta_0$. If we let $(z_1,z_2,z_3,z_4)$ be coordinates on $\bbC^4$ we can write $\omega_0$, $ \theta_0$ as 
\begin{equation*}
 \omega_0=\frac{\imi}{2}(\rd z_1\wedge \rd \bar{z}_1+\dotsb +\rd z_4\wedge \rd \bar{z}_4) \text{ and }
 \theta_0=\rd z_1\wedge\dotsb \wedge \rd z_4.
\end{equation*}

We define a Calabi--Yau $4$-fold as a quadruple $(X,g,\omega,\theta)$ consisting of a K\"ahler manifold $(X,g,\omega)$ and a holomorphic $(4,0)$-form $\theta$ such that $|\theta|\equiv 4$. It can be shown that for any $p\in X$ there exists an isometry $T_pX\rightarrow \bbC^4$ taking $(\omega,\theta)$ to $(\omega_0,\theta_0)$.

\begin{prop}
\label{prop:cyisspin7}
Let $(X,g,\omega,\theta)$ be a Calabi--Yau $4$-fold.  Define a $4$-form by  $\Omega=\frac{1}{2}\omega\wedge\omega+\re{\theta}$, then $(\Omega,g)$ is a torsion-free $\mathrm{Spin}(7)$-structure on $X$.
\end{prop}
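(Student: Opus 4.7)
The plan is to split the proposition into its two constituent parts: a pointwise algebraic statement (that $(\Omega,g)$ is a Spin(7)-structure in the sense of the previous definition) and a global integrability statement $d\Omega=0$ (which, by the previously cited proposition from \cite{joyce2000compact}, is equivalent to torsion-freeness and gives $\Hol(g)\subseteq\mathrm{Spin}(7)$ with $\Omega$ as the induced $4$-form).

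For the pointwise part, the condition we must verify at each $p\in X$ is the existence of an oriented isometry $T_pX\to\bbR^8$ sending $\Omega_p$ to $\Omega_0$. Since the Calabi--Yau hypothesis already supplies an oriented isometry $T_pX\to\bbC^4$ sending $(\omega_p,\theta_p)$ to $(\omega_0,\theta_0)$, the problem reduces to a single check on the flat model $\bbC^4\cong\bbR^8$ via $z_j=x_{2j-1}+\imi x_{2j}$. I would expand both summands in these real coordinates: $\frac{1}{2}\omega_0\wedge\omega_0$ produces the six ``K\"ahler'' terms $\rd x_{1234}+\rd x_{1256}+\rd x_{1278}+\rd x_{3456}+\rd x_{3478}+\rd x_{5678}$, while a direct expansion of $\re\theta_0=\re(\rd z_1\wedge\rd z_2\wedge\rd z_3\wedge\rd z_4)$ yields the remaining eight $(2,2)$-style monomials. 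I would then compare the resulting $4$-form with the Cayley form $\Omega_0$ defined earlier in the paper and exhibit an orientation-preserving permutation of the coordinates (possibly combined with an overall sign, as the text already warns is needed to match different conventions) that identifies them.

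For the integrability part, closedness of $\Omega$ follows from two standard observations. First, $X$ is K\"ahler, so $\rd\omega=0$ and hence $\rd(\omega\wedge\omega)=2\omega\wedge\rd\omega=0$. Second, $\theta$ is a holomorphic $(4,0)$-form on a complex $4$-fold, so $\bar\partial\theta=0$ by holomorphicity and $\partial\theta$ is a $(5,0)$-form on a complex $4$-manifold and therefore vanishes; thus $\rd\theta=0$ and $\rd(\re\theta)=\re(\rd\theta)=0$. Combining gives $\rd\Omega=0$, at which point the previous proposition delivers the conclusion.

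The only real obstacle is the finite but unilluminating bookkeeping in the pointwise check: the Cayley form in this paper is written with a particular sign and ordering convention, and one must track signs carefully when expanding $(\rd x_1+\imi\rd x_2)\wedge\cdots\wedge(\rd x_7+\imi\rd x_8)$ to see that $\re\theta_0$ matches the eight ``octonionic'' terms of $\Omega_0$ after the permutation selected above. Everything else is immediate from the Calabi--Yau hypothesis and the Kähler identities.
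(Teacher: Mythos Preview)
Your proposal is correct and follows essentially the same approach as the paper: reduce the pointwise check to the flat model via the Calabi--Yau frame, compare $\tfrac{1}{2}\omega_0\wedge\omega_0+\re\theta_0$ with $\Omega_0$ under $z_j=x_{2j-1}+\imi x_{2j}$, and then invoke $\rd\omega=\rd\theta=0$ for torsion-freeness. The only slight over-caution is your anticipated coordinate permutation: the paper's particular choice of $\Omega_0$ (already noted to differ from Harvey--Lawson's by a permutation and sign) is arranged precisely so that the identification $z_j=x_{2j-1}+\imi x_{2j}$ yields $\Omega_0$ on the nose, with no further adjustment needed.
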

\begin{proof}
Let $p\in X$ and identify $\omega_p$ and $\theta_p$ with the standard forms on $\bbC^4$. Identifying $\bbC^4$ with $\bbR^8$ via $z_j=x_{2j-1}+\imi x_{2j}$ 
and comparing the expressions for $\Omega_p$ and $\Omega_0$ we see that $(\Omega,g)$ defines a $\mathrm{Spin}(7)$-structure. Since $(X,g,\omega,\theta)$ is a Calabi-Yau manifold we have $\rd\omega=\rd\theta=0$, which implies $\rd\Omega=0$.
\end{proof}

The proposition above describes a particular embedding of $\mathrm{SU}(4)\hookrightarrow \mathrm{Spin}(7)$. 
Let $(X,g,\omega,\theta)$ be a Calabi--Yau $4$-fold. Then the 4-form $\Omega_\phi=\frac{1}{2}\omega\wedge\omega+\re(e^{\imi\phi}\theta)$ for $\phi\in[0,2\pi)$ also defines a torsion-free $\mathrm{Spin}(7)$-structure on $X$ and a different embedding of $\mathrm{SU}(4)\hookrightarrow \mathrm{Spin}(7)$.


%
%

\section{Construction of $\mathrm{Spin}(7)$-manifolds}
The essential idea in Joyce's constructions of manifolds with exceptional holonomy is that of resolving the singularities of orbifolds within a particular holonomy group. We will therefore review the definitions of orbifolds and discuss Riemannian metrics and their holonomy groups on orbifolds. We will then give a short overview of the construction of manifolds with holonomy $\mathrm{Spin}(7)$ from Calabi--Yau 4-orbifolds. We direct the reader to \cite{Joyce:1999fk} and \cite[Ch. 10]{joyce2000compact} for the details of the construction.

\subsection{Orbifolds}
\begin{defn}
An \emph{orbifold} is a singular manifold $M$ of dimension $n$ whose singularities are locally isomorphic to quotient singularities $\bbR^n/G$ for finite subgroups $G\subset \GL(n,\bbR)$, such that if $1\neq \gamma\in G$, then the subspace $V_\gamma$ of $\bbR^n$ fixed by $\gamma$ has $\dim V_\gamma\leq n-2$.
\end{defn}

We say a point $p$ in $M$ is an \emph{orbifold point with orbifold group $G$} if $M$ is locally isomorphic to $\bbR^n/G$ at $p$ with $G$ non-trivial. 


\begin{defn}
A \emph{Riemannian metric $g$ on an orbifold $M$} is a Riemannian metric in the usual sense on the nonsingular part of $M$ and where $M$ is locally isomorphic to $\bbR^n/G$, the metric $g$ can be identified with the quotient of a $G$-invariant Riemannian metric defined on an open set of $0$ in $\bbR^n$.
We define the \emph{holonomy group} $\Hol(g)$ of $g$ to be the holonomy group of the restriction of $g$ to the nonsingular part of $M$.
\end{defn}

If $p$ is an orbifold point of $M$ with orbifold group $G$ then we have an inclusion of groups
\[
G\subseteq \Hol(g).
\]
Therefore for an orbifold to have holonomy $\mathrm{Spin}(7)$ we must have that each orbifold group $G$ lies in (the conjugacy class of subgroups of) $\mathrm{Spin}(7)$.

Many results for manifolds carry over with small modifications to orbifolds. In particular the Calabi conjecture holds for compact K\"ahler orbifolds. 
As a consequence we have the following theorem \cite[Th. 6.5.6]{joyce2000compact}, which we can use  to find Calabi--Yau metrics on orbifolds.

\begin{thm}
\label{thm:ccorbifolds}
Let $X$ be a compact complex orbifold with $c_1(X)=0$ admitting K\"ahler metrics. Then there is a unique Ricci-flat K\"ahler metric in every K\"ahler class on $X$.
\end{thm}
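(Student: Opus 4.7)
The plan is to reduce the statement to a complex Monge--Amp\`ere equation on $X$ and then adapt Yau's proof of the Calabi conjecture to the orbifold setting, with uniqueness following from a standard integration-by-parts argument. Fix a K\"ahler form $\omega$ in the given class. Since $c_1(X)=0$ and the $\partial\bar\partial$-lemma holds on compact K\"ahler orbifolds (which one checks locally on uniformizing charts), the Ricci form $\rho(\omega)$ can be written as $\rho(\omega) = i\partial\bar\partial f$ for some smooth function $f$ on $X$, normalized by $\int_X (e^f - 1)\,\omega^n = 0$. Any other K\"ahler form in the same class has the shape $\omega + i\partial\bar\partial\phi$, and Ricci-flatness of the associated metric is equivalent to the complex Monge--Amp\`ere equation $(\omega + i\partial\bar\partial\phi)^n = e^f\,\omega^n$.

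Next I would run the continuity method on the family $(\omega + i\partial\bar\partial\phi_t)^n = c_t\, e^{tf}\,\omega^n$ for $t\in[0,1]$, with constants $c_t>0$ chosen so that total volumes match. The trivial solution $\phi_0=0$ at $t=0$ gives a starting point. Openness of the solvable set follows from the implicit function theorem in H\"older spaces of smooth functions on $X$---which, on each local uniformizing chart $U/G$ with $U\subseteq \bbC^n$, means $G$-invariant H\"older functions on $U$. Closedness requires uniform a priori $C^{k,\alpha}$ estimates in $t$, and this is the crux of the argument.

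The main obstacle is therefore adapting Yau's a priori estimates to the orbifold. The key observation is that Yau's arguments---the $C^0$ estimate via Moser iteration, the second-order Laplacian estimate using a lower bisectional curvature bound for $\omega$, and the Calabi--Yau third-order estimate---are local and pointwise in nature, so they transfer without essential change to local uniformizing charts once we restrict to $G$-invariant functions. Because the orbifold singular set has real codimension at least two, integration by parts, Stokes' theorem, and the global maximum principle all behave as on a compact smooth manifold. Assembling the local estimates into global ones closes up the continuity method and produces a K\"ahler potential $\phi$, hence a Ricci-flat K\"ahler metric in the given class. For uniqueness, if $\omega + i\partial\bar\partial\phi_1$ and $\omega + i\partial\bar\partial\phi_2$ are both Ricci-flat K\"ahler forms in the same class, then integrating $(\phi_1 - \phi_2)\bigl((\omega + i\partial\bar\partial\phi_1)^n - (\omega + i\partial\bar\partial\phi_2)^n\bigr)$ by parts over $X$ and expanding the difference telescopically forces $i\partial(\phi_1-\phi_2)\wedge\bar\partial(\phi_1-\phi_2)$ to vanish in an integrated sense, so $\phi_1 - \phi_2$ is constant and the two metrics coincide.
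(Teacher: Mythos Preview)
The paper does not actually prove this theorem: it simply quotes it as \cite[Th.~6.5.6]{joyce2000compact} and moves on. Your proposal, by contrast, outlines the genuine proof---Yau's continuity method for the complex Monge--Amp\`ere equation, with the a priori estimates carried out on local uniformizing charts and the observation that the codimension-two singular locus does not obstruct integration by parts or the maximum principle. This is indeed the standard route (and is essentially what the cited reference does), so your sketch is correct and more substantive than what the paper itself provides.
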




\subsection{Resolution of singularities}
Given a Riemannian orbifold $(M,g)$ with holonomy $\Hol(g)$ we would like to know whether we can find a resolution $(\hat{M},\hat{g})$ of $M$ such that $\Hol(\hat{g})\subseteq \Hol(g)$.

If $Hol(g)\subset \mathrm{SU}(n)$ and then we can use complex geometry to determine whether resolutions exist which do not change the holonomy group.
Suppose $X$ is a Gorenstein algebraic variety so the canonical sheaf $\mathcal{O}(K_X)$ in invertible. 
A resolution $\pi:\hat{X}\rightarrow X$ is called \emph{crepant} if $\pi^*(\mathcal{O}(K_X))=\mathcal{O}(K_{\hat{X}})$.
The importance of crepant resolutions is that a crepant resolution of a Calabi--Yau orbifold is a Calabi--Yau manifold. 
The question of  existence and uniqueness of crepant resolutions of quotient singularities $\bbC^n/G$ for finite groups $G\subset \mathrm{SU}(n)$ is a difficult one, to which we will return later.

If we wish to construct manifolds with special holonomy then we will not, in general, be able to use algebraic techniques and instead we must rely on finding singularities of a type, which we know we can resolve within a given holonomy group.

\subsection{Review of construction}
\label{sec:ReviewofConstruction}
Now suppose $Y$ is a complex $4$-orbifold admitting metrics with holonomy $\mathrm{SU}(4)$.
We require $Y$ to have isolated singularities $\{p_1,\dotsc,p_k\}$ modelled on $\bbC^4/\bbZ_4$, where the generator of $\bbZ_4$ acts as
\begin{equation}
    \label{eq:alpha}
 \alpha:(z_1,z_2,z_3,z_4)\mapsto (iz_1,iz_2,iz_3,iz_4).
\end{equation}
The group $\bbZ_4$ lies in $\mathrm{SU}(4)$, which is consistent with $Y$ having holonomy $\mathrm{SU}(4)$.  We also require that $Y$ admits an antiholomorphic isometric involution $\tau$ with fixed points the finite set $\{p_1,\dotsc,p_k\}$. 

Since $\tau$ is antiholomorphic, the complex structure does not descend to the quotient of $Y$ by $\tau$. However we can form a torsion-free $\mathrm{Spin}(7)$-structure $(\Omega,g)$ given by $\Omega=\frac{1}{2}\omega\wedge\omega+\re{\theta}$, which is $\tau$-invariant.

Defining  $Z=Y/\langle \tau\rangle$ we have  that this $\tau$-invariant torsion-free $\mathrm{Spin}(7)$-structure on $Y$ descends to $Z$ and the orbifold singularities of $Z$ are modelled on $\bbR^8/G$ where $G$ is a finite subgroup of $\mathrm{Spin}(7)$. 
We now wish to resolve these singularities by glueing in ALE $\mathrm{Spin}(7)$-manifold to construct a $\mathrm{Spin}(7)$-manifold $M$.



It is shown in \cite[Prop. 5.3]{Joyce:1999fk} that all the singularities are of the same form.
If we define coordinates on $\bbR^8$ as $(x_1,\dotsc,x_8)$ and complex coordinates by $z_i=x_{2i-1}+i x_{2i}$ then the singularities are of the form $\bbR^8/G$ where $G=\langle\alpha,\beta\rangle$ is a finite non-abelian subgroup of $\mathrm{Spin}(7)$ generated by $\alpha$, which is described in Eq. \eqref{eq:alpha}, and $\beta$, whose action on $\bbC^4$ is given by 
\begin{equation*}
    \beta:(z_1,z_2,z_3,z_4)\mapsto (\overline{z}_2,-\overline{z}_1,\overline{z}_4,-\overline{z}_3).
\end{equation*}

The singularity $\bbR^8/G$ can be resolved with holonomy contained in $\mathrm{Spin}(7)$ in two ways. Both resolutions $X_1,X_2$ have the same holonomy as abstract Lie groups but the embeddings into $\GL(8,\bbR)$ (or $\mathrm{Spin}(7)$) are different. 
If we choose the resolutions of the singularities of $Z$ wisely we can ensure that the holonomy of the resolution of the orbifold $Z$ has holonomy exactly $\mathrm{Spin}(7)$ and not a proper subgroup of it.

%
%
The precise necessary conditions on the complex 4-orbifold, $Y$, are stated below. 


\begin{cond}
Let $Y$ be a compact complex 4-orbifold with $c_1(Y)=0$, admitting K\"ahler metrics.
Let $\tau$ be an antiholomorphic involution on $Y$.
We require that $Y$ have isolated singularities $\{p_1\dotsc,p_k\}$, with $k\geq 1$, modelled on $\bbC^4/\bbZ_4$ as described above and that the fixed point set of $\tau$ is $\{p_1,\dotsc,p_k\}$. We also require that $Y\setminus \{p_1,\dotsc,p_k\}$ is simply-connected and $h^{2,0}(Y)=0$.
\label{cond:Y}
\end{cond}

We will use the following theorem of Joyce to construct examples of $\mathrm{Spin}(7)$-manifolds from appropriate complex $4$-orbifolds, which can be found in \cite[Th. 5.14]{Joyce:1999fk}.

\begin{thm}
Suppose $Y$ satisfies Condition \ref{cond:Y}.
Let $M$ be the resulting compact $8$-manifold defined in \cite[Def. 5.8]{Joyce:1999fk}. 
Then there exist torsion-free $\mathrm{Spin}(7)$-structures $(\Omega,g)$ on $M$. 
We can choose the resolutions of the singularities so that $\Hol(g)=\mathrm{Spin}(7)$.
\end{thm}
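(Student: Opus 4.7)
The plan is to follow Joyce's construction strategy in three analytic stages: produce a torsion-free $\mathrm{Spin}(7)$-structure on $Y$ that is $\tau$-invariant, transfer it to $Z=Y/\langle\tau\rangle$ and an approximately torsion-free structure on the resolution $M$, then perturb it to genuinely torsion-free and check the full holonomy. To begin, since $Y$ is a compact K\"ahler orbifold with $c_1(Y)=0$, Theorem \ref{thm:ccorbifolds} yields a Ricci-flat K\"ahler metric $g$ in every K\"ahler class. Averaging over $\langle\tau\rangle$ and using uniqueness in a $\tau$-invariant class, we may take $g$ to be $\tau$-invariant. Since the canonical bundle of $Y$ is trivial and $h^{2,0}(Y)=0$, the holomorphic $(4,0)$-form $\theta$ is unique up to a constant; we scale it so $|\theta|\equiv 4$ and choose its phase so that $\tau^*\theta=\bar\theta$. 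Proposition \ref{prop:cyisspin7} then gives a torsion-free $\mathrm{Spin}(7)$-structure $(\Omega,g)$ on $Y$ with $\Omega=\tfrac12\omega\wedge\omega+\re\theta$ that is $\tau$-invariant, hence descends to a $\mathrm{Spin}(7)$-orbifold structure on $Z$ whose singularities are the $\mathbb{R}^8/G$ models recalled above.

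Next, for each singular point $p_i$ of $Z$, excise a small ball and glue in one of the two ALE $\mathrm{Spin}(7)$-manifolds $X_1$ or $X_2$ resolving $\mathbb{R}^8/G$; these are described in \cite{Joyce:1999fk}, and their $\mathrm{Spin}(7)$-structures approach the flat one on $\mathbb{R}^8/G$ at infinity with a definite decay rate. A standard cut-off procedure combines these with $(\Omega,g)$ on $Z$ minus a neighbourhood of the singularities to produce, for each small gluing parameter $t>0$, a smooth $\mathrm{Spin}(7)$-structure $(\Omega_t,g_t)$ on the resolved manifold $M$ whose torsion $\mathrm{d}\Omega_t$ is supported in thin annular regions and controlled by a power of $t$.

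The core of the argument, and the main obstacle, is the analytic perturbation step: one invokes Joyce's general existence theorem \cite[Th.~13.6.1]{joyce2000compact}, which asserts that whenever a $\mathrm{Spin}(7)$-structure on a compact $8$-manifold has sufficiently small torsion measured against explicit bounds on injectivity radius, Ricci curvature, and the Sobolev constant, it can be deformed to a torsion-free one. The technical difficulty lies in verifying these hypotheses uniformly as $t\to 0$, because the geometry of $M$ degenerates along the neck regions; this requires the careful weighted-norm estimates of \cite[\S 5]{Joyce:1999fk}, which we simply quote. The outcome is a torsion-free $\mathrm{Spin}(7)$-structure on $M$, close to $(\Omega_t,g_t)$ in a suitable norm.

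Finally, to upgrade $\Hol(g)\subseteq\mathrm{Spin}(7)$ to equality, I would use the classification of holonomy groups compatible with a torsion-free $\mathrm{Spin}(7)$-structure on a compact simply-connected $8$-manifold: the possibilities are $\mathrm{Spin}(7)$, $\mathrm{SU}(4)$, $\mathrm{Sp}(2)$, $\mathrm{SU}(2)^2$, and $\{1\}$, distinguished by the number of parallel spinors, or equivalently by $\hat{A}(M)$. The hypotheses that $Y\setminus\{p_1,\dotsc,p_k\}$ is simply-connected and $h^{2,0}(Y)=0$, combined with the fact that $\tau$ is antiholomorphic, ensure that $M$ is simply-connected and does not itself admit a K\"ahler or hyperk\"ahler reduction of the structure group; this is essentially the content of the second half of \cite[Th.~5.14]{Joyce:1999fk}. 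By choosing the model resolution ($X_1$ or $X_2$) at each $p_i$ so that the resulting parallel spinor count is exactly one, we conclude $\Hol(g)=\mathrm{Spin}(7)$.
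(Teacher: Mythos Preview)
The paper does not prove this theorem: it is quoted without proof from \cite[Th.~5.14]{Joyce:1999fk} and used as a black box for the rest of the construction. Your proposal is therefore not a variant of anything in the present paper but a sketch of Joyce's original argument, and as such it is a faithful outline of the strategy in \cite{Joyce:1999fk} and \cite[Ch.~15]{joyce2000compact}: produce a $\tau$-invariant Calabi--Yau structure on $Y$ via the orbifold Calabi conjecture, descend to $Z$, glue in the ALE models $X_1,X_2$, and perturb using the small-torsion existence theorem.

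Two remarks on the sketch itself. First, uniqueness of $\theta$ up to a scalar follows from triviality of $K_Y$ (equivalently $h^{4,0}(Y)=1$), not from $h^{2,0}(Y)=0$; the latter hypothesis enters only in the holonomy and Betti-number computations. Second, your final paragraph is partly circular: you appeal to ``the second half of \cite[Th.~5.14]{Joyce:1999fk}'' to rule out a K\"ahler reduction of the structure group, but Theorem~5.14 is precisely the statement you are proving. Joyce's actual mechanism is concrete: the two ALE resolutions $X_1,X_2$ have holonomy groups which, while abstractly isomorphic, sit inside $\mathrm{Spin}(7)$ via inequivalent embeddings, so that using both types among the $k\geq 1$ singular points (or, alternatively, computing $\hat{A}(M)=1$ directly) forces $\Hol(g)$ not to lie in any $\mathrm{SU}(4)$; combined with simple connectivity of $M$, this gives $\Hol(g)=\mathrm{Spin}(7)$. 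You gesture at this with the parallel-spinor count, but the argument should be made self-contained rather than deferring to the theorem under proof.
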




\section{Weighted projective spaces and hypersurfaces}

Our goal is now to find complex 4-orbifolds which satisfy Condition \ref{cond:Y}. We will use algebraic geometry to find examples of such orbifolds. Hypersurfaces in weighted projective spaces provide a large source of orbifolds with specified (cyclic quotient) singularities.
We will therefore begin by reviewing weighted projective spaces, their singularities and hypersurfaces contained in weighted projective spaces. 
The majority of this section is from \cite{ianofletcher}. 

\begin{defn}
Let $a_0,\dotsc,a_n$ be positive integers with $\gcd(a_0,\dotsc,a_n)=1$. The \emph{weighted projective space} $\bbC\bbP^n_{a_0,\dotsc,a_n}$ with weights $a_0,\dotsc,a_n$ is the quotient of $\bbC^{n+1}\setminus\{0\}$ by the action of $\bbC^*$ given by
\[
\lambda:(z_0,\dotsc,z_n)\mapsto (\lambda^{a_0}z_0,\dotsc,\lambda^{a_n}z_n).
\]
\end{defn}

In general $\bbC\bbP^n_{a_0,\dotsc,a_n}$ will have singularities where the action of $\bbC^*$ on $\bbC^{n+1}\setminus\{0\}$ is not free.
The stabiliser groups of these points are finite and so we can treat $\bbC\bbP^n_{a_0,\dotsc,a_n}$ as an orbifold.
We can also treat $\bbC\bbP^n_{a_0,\dotsc,a_n}$ as a singular algebraic variety by considering $\bbC\bbP^n_{a_0,\dotsc,a_n}$ as $\Proj$ of a graded ring.
This will be a useful viewpoint for us because it shows the similarities between weighted projective spaces and the usual straight projective space.

Let $R$ be the graded ring $\bbC[z_0,\dotsc,z_n]$ where $z_i$ has weight $a_i$. $R$ has a direct sum decomposition $R=\bigoplus_dR_d$ into its graded pieces. Elements of $R_d$ will be called \emph{weighted homogeneous polynomials of degree $d$} but we will soon drop the term weighted and leave it as understood.

We can treat $\bbC\bbP^n_{a_0,\dotsc,a_n}$ as a variety as $\Proj(R)$. From generalities on taking $\Proj$ of graded rings, see \cite[Prop. 5.11]{Hartshorne:1977fk}, we have that a finitely generated graded $R$-module determines a coherent sheaf of $\mathcal{O}_{\bbC\bbP^n_{a_0,\dotsc,a_n}}$-modules.
In particular the module $R(m)$ determines a sheaf, which we will denote $\mathcal{O}(m)$ for brevity.

We should be careful when distinguishing between the two viewpoints. For example we have the following result from \cite[Cor. 5.9]{ianofletcher}, which holds only when considering $\bbC\bbP^n_{a_0,\dotsc,a_n}$ as a variety.

\begin{lem}
Let $a_0,\dotsc,a_n$ be positive integers with $\gcd(a_0,\dotsc,a_n)=1$. Let $q=\gcd(a_1,\dotsc,a_n)$. Then $\bbC\bbP^n_{a_0,\dotsc,a_n}\simeq \bbC\bbP^n_{a_0,a_1/q,\dotsc,a_n/q}$ as varieties.
\end{lem}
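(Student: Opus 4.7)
The plan is to establish the isomorphism via the $\Proj$ construction, by realising both sides as $\Proj$ of the same graded ring. Let $R=\bbC[z_0,\dots,z_n]$ with $\deg z_i=a_i$, so that $\bbC\bbP^n_{a_0,\dots,a_n}=\Proj R$ as a variety, and let $R'=\bbC[y_0,\dots,y_n]$ with $\deg y_0=a_0$ and $\deg y_i=a_i/q$ for $i\geq 1$, so that $\bbC\bbP^n_{a_0,a_1/q,\dots,a_n/q}=\Proj R'$. The strategy is to identify $R'$ with the $q$-th Veronese subring $R^{(q)}=\bigoplus_d R_{qd}$, equipped with the rescaled grading $R^{(q)}_d:=R_{qd}$.

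The first ingredient is the standard fact that for any positively graded ring $R$ and any $q\geq 1$, $\Proj R\cong \Proj R^{(q)}$ as varieties (for example, \cite[Ex. II.5.13]{Hartshorne:1977fk}); I would simply quote this. Given that, the lemma reduces to showing that $R^{(q)}\cong R'$ as graded rings.

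The crucial arithmetic input is that $\gcd(a_0,q)=1$: any prime $p$ dividing both $a_0$ and $q=\gcd(a_1,\dots,a_n)$ would divide every $a_i$, contradicting the hypothesis $\gcd(a_0,\dots,a_n)=1$. Now let $z_0^{\alpha_0}z_1^{\alpha_1}\cdots z_n^{\alpha_n}\in R_{qd}$, so that $\alpha_0 a_0+\sum_{i\geq 1}\alpha_i a_i=qd$. Since $q$ divides $qd$ and divides every $a_i$ for $i\geq 1$, it divides $\alpha_0 a_0$, and by coprimality $q\mid \alpha_0$. Hence every element of $R_{qd}$ is a polynomial in $z_0^q, z_1,\dots,z_n$, so as a graded ring $R^{(q)}=\bbC[z_0^q,z_1,\dots,z_n]$ with $z_0^q$ in rescaled degree $a_0$ and $z_i$ in rescaled degree $a_i/q$. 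The assignment $y_0\mapsto z_0^q$, $y_i\mapsto z_i$ for $i\geq 1$ then gives an isomorphism of graded rings $R'\xrightarrow{\sim} R^{(q)}$, and applying $\Proj$ yields $\bbC\bbP^n_{a_0,a_1/q,\dots,a_n/q}\cong \bbC\bbP^n_{a_0,\dots,a_n}$.

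I do not expect a serious obstacle: the isomorphism $\Proj R\cong \Proj R^{(q)}$ is a black-box result, and the only real content is the short divisibility argument using $\gcd(a_0,q)=1$ to force the exponent $\alpha_0$ to be a multiple of $q$. The argument would clearly fail without the coprimality hypothesis $\gcd(a_0,\dots,a_n)=1$, which is reassuring since that hypothesis is built into the definition of weighted projective space.
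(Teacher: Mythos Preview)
Your argument is correct and is the standard proof of this fact. The paper itself does not give a proof but simply quotes the result from Iano-Fletcher \cite[Cor.~5.9]{ianofletcher}; the proof there (and in Dolgachev's original notes) proceeds exactly as you do, via the Veronese subring $R^{(q)}$ and the observation that $\gcd(a_0,q)=1$ forces the exponent of $z_0$ in any monomial of $R_{qd}$ to be a multiple of $q$. So there is nothing to compare: you have supplied the details that the paper outsources.
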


We have the following corollary.

\begin{cor}
Let $a_0,\dotsc,a_n$ be positive integers with $\gcd(a_0,\dotsc,a_n)=1$. Then $\bbC\bbP^n_{a_0,\dotsc,a_n}\simeq \bbC\bbP^n_{b_0,\dotsc,b_n}$ as varieties for some weights $b_0,\dotsc,b_n$ such that 
$
\gcd(b_0,\dotsc,b_{i-1},b_{i+1},\dotsc,b_n)=1
$
 for each $i$.
\end{cor}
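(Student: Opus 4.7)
The plan is to iterate the preceding lemma. The lemma as stated singles out the first weight, but because permuting the homogeneous coordinates of $\bbC^{n+1}$ induces an isomorphism of weighted projective spaces, the analogous statement holds for any fixed index: setting $q_i = \gcd(a_0,\dotsc,a_{i-1},a_{i+1},\dotsc,a_n)$, one has
\[
\bbC\bbP^n_{a_0,\dotsc,a_n} \simeq \bbC\bbP^n_{a_0/q_i,\dotsc,a_{i-1}/q_i,\,a_i,\,a_{i+1}/q_i,\dotsc,a_n/q_i}
\]
as varieties. Before iterating, I would verify that the normalisation $\gcd(a_0,\dotsc,a_n) = 1$ is preserved by such a reduction: any prime $p$ dividing all of the new weights divides $a_i$ and each $a_j/q_i$ for $j \neq i$, hence divides $a_i$ and every $a_j$ for $j\ne i$, contradicting the original normalisation.

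Starting from $(a_0,\dotsc,a_n)$, I would then cycle through $i = 0, 1, \dotsc, n$ repeatedly, at each stage replacing the current tuple by its image under the reduction at index $i$. The sum $\sum_{j=0}^n a_j$ serves as a monovariant: whenever some $q_i \geq 2$, the step divides $n$ of the $n+1$ weights by $q_i$ and so strictly decreases the sum. Since the weights are positive integers and the sum is bounded below by $n+1$, the process must terminate after finitely many steps in a tuple $(b_0,\dotsc,b_n)$ for which every $q_i = 1$, which is precisely the condition in the conclusion of the corollary.

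The main subtlety is the concern that a reduction at index $i$ might reintroduce a common factor among the weights when one later omits a different index $j$. For instance, after reducing at index $0$ to ensure $\gcd(b_1,\dotsc,b_n) = 1$, a subsequent reduction at index $1$ by some $q' \geq 2$ divides $b_2,\dotsc,b_n$ by $q'$, and the new $\gcd(b_1, b_2/q', \dotsc, b_n/q')$ might again exceed $1$. The monovariant argument bypasses this difficulty cleanly: the iteration cannot continue indefinitely, and it can only stabilise once all $n+1$ of the relevant gcds equal $1$ simultaneously, so no explicit bookkeeping of the intermediate tuples is required.
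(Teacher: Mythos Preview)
Your argument is correct. The paper does not supply a proof for this corollary, treating it as an immediate consequence of the preceding lemma; your careful iteration with the sum-of-weights monovariant is exactly the standard way to make that implication precise, and it matches what the paper has in mind.
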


This motivates the following definition.

\begin{defn}
We say 
$\bbC\bbP^n_{a_0,\dotsc,a_n}$
is \emph{well-formed} if 
\[
\gcd(a_0,\dotsc,a_{i-1},a_{i+1},\dotsc,a_n)=1 \text{ for each } i.
\]
\end{defn}

The condition of being well-formed is related to the structure of the singularities of $\bbC\bbP^n_{a_0,\dotsc,a_n}$. 
The singularities of $\bbC\bbP^n_{a_0,\dotsc,a_n}$ are all cyclic quotient singularities. 
We say a  cyclic quotient singularity $\bbC^n/\bbZ_m$ is of type $\frac{1}{m}(a_1,\dotsc,a_n)$ if $\bbZ_m$ acts on $\bbC^n$ as
\[
(z_1,\dotsc,z_n)\xrightarrow{\xi}(\xi^{a_1}z_1,\dotsc,\xi^{a_n}z_n)
\]
where $\xi^m=1$.

For any subset $I\subset \{0,\dotsc,n\}$ we define $S_I=\{[z_0,\dotsc,z_n]:z_{j}=0;\text{ $\forall j\notin I$}\}\allowbreak\subset \bbC\bbP^n_{a_0,\dotsc,a_n} $.
Now suppose $\gcd(a_{i_0},\dotsc,a_{i_k})=m\neq 1$, then a generic point $p \in S_{i_0,\dotsc,i_k}$ is an orbifold point modelled on the singularity $\bbC^k\times \bbC^{n-k}/\bbZ_m$.
If we extend the sequence $(i_0,\dotsc,i_k)$ to be a permutation $(i_0,\dotsc,i_n)$ of the sequence  $(0,\dotsc,n)$ then the singularity $\bbC^{n-k}/\bbZ_m$ is of type $\frac{1}{m}(a_{i_{k+1}},\dotsc,a_{i_n})$. 


%
%
\begin{exmp}
Consider the weighted projective space $\bbC\bbP^3_{1,2,3,6}$. The singular locus consists of the union of the two curves $S_{1,3}\cup S_{2,3}$, which intersect at the singular point $S_{3}$. The singularity at a generic point of $S_{1,3}$ is modelled on $\bbC\times \bbC^2/\bbZ_2$, where $\bbZ_2$ acts as $(z_0,z_2)\mapsto(-z_0,-z_2)$. The singularity at a generic point of $S_{1,3}$ is modelled on $\bbC\times\bbC^2/\bbZ_3$, where $\bbZ_3$ acts as $(z_0,z_1)\mapsto (\xi z_0,\xi^{-1}z_1)$ and $\xi^3=1$. Finally we have a nonisolated singular point at $S_{3}$, which is modelled on $\bbC^3/\bbZ_6$, where $\bbZ_6$ acts as $(z_0,z_1,z_2)\mapsto (\zeta z_0,\zeta^2z_1,\zeta^3z_2)$ and $\zeta^6=1$.

\end{exmp}

From the description of singularities above we see that the condition of being well-formed is equivalent to $\bbC\bbP^n_{a_0,\dotsc,a_n}$ having only singularities in complex codimension greater than 1.

\subsection{Hypersurfaces}

A section $f\in \Gamma(\bbC\bbP^n_{a_0,\dotsc,a_n},\mathcal{O}(d))=R_d$ determines a hypersurface in $\bbC\bbP^n_{a_0,\dotsc,a_n}$, by definition of degree $d$.  
It can be shown that any hypersurface is determined by such a section \cite[Th. 3.7]{Cox:1995uq}. 






\subsubsection{Quasismoothness}

Recall that a projective variety is smooth if the affine cone is smooth away from the origin. We shall define quasismoothness in a similar way. Let $\pi:\bbC^{n+1}\setminus\{0\}\mapsto \bbC\bbP^n_{a_0,\dotsc,a_n}$ be the projection.

 
 \begin{defn}
 Let $Y\subset \bbC\bbP^n_{a_0,\dotsc,a_n}$ be an algebraic variety. We say $Y$ is \emph{quasismooth} if $\pi^{-1}(Y)$ is smooth.
 \end{defn}
 
An algebraic variety $Y$ is quasismooth if $Y$ only has singularities coming from the orbifold singularities of $\bbC\bbP_{a_0,\dotsc,a_n}$. We will restrict our attention to quasismooth hypersurfaces because we can understand their singularities easily in terms of those of the ambient weighted projective space. Regarding $\bbC\bbP^n_{a_0,\dotsc,a_n}$ as an orbifold, quasismoothness of $Y$ is equivalent to being a sub-orbifold, see \cite[Prop. 3.5]{batyrev1994hodge}.


Note that a generic hypersurface of a fixed degree in a particular weighted projective space is not necessarily quasismooth.

\begin{exmp}
Consider the graded ring $R=\bbC[z_0,z_1,z_2]$ where the weights are $1,2,2$ respectively, then $\Proj(R)=\bbC\bbP^2_{1,2,2}$. A generic weighted homogeneous polynomial of degree $3$ is of the form $f=\lambda_1z_0z_1+\lambda_2z_0z_2+\lambda_3z_0^3$. The hypersurface $Y_3=V(f)$ is not quasismooth since the affine variety defined by $f$ is singular along the set $\{(z_0,z_1,z_2)\in \bbC^3:\text{ $z_0=0$ and $\lambda_1 z_1+\lambda_2 z_2=0$}\}$.
\end{exmp}

The conditions for the generic hypersurface of degree $d$ to be quasismooth are described below, taken from \cite[Th. 8.1]{ianofletcher}.

\begin{thm}
The generic hypersurface $Y_d$ of degree d in $\bbC\bbP^n_{a_0,\dotsc,a_n}$ is quasismooth if and only if either $a_i=d$ for some $i$, i.e. $Y_d$ is a linear cone, or for every nonempty subset $\{i_0,\dotsc,i_k\}\subset \{0,\dotsc,n\}$ either
\begin{enumerate}
\item there exists a monomial $z_{i_0}^{d_0}\dotsm z_{i_k}^{d_k}$ of degree $d$; or
\item for $j=0,\dotsc,k$ there exist monomials $z_{i_0}^{d_{0,j}}\dotsm z_{i_k}^{d_{k,j}} z_{e_j}$ of degree $d$, where the $e_j\notin\{i_0,\dotsc,i_k\}$ are distinct.
\end{enumerate}
\end{thm}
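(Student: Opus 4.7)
The plan is to stratify $\bbC^{n+1}\setminus\{0\}$ by coordinate supports and test quasismoothness one stratum at a time. For each nonempty $I=\{i_0,\dotsc,i_k\}\subseteq\{0,\dotsc,n\}$ set $T_I=\{p\in\bbC^{n+1}\setminus\{0\} : \supp(p)=I\}$; quasismoothness of $Y_d$ asks that no point of any $T_I$ is a singular point of the affine cone $V(f)$. The first observation I record is a combinatorial one: for $p\in T_I$, only monomials involving exclusively the variables $\{z_i:i\in I\}$ contribute to $f(p)$ and to $\partial f/\partial z_i(p)$ for $i\in I$, while for $e\notin I$ only monomials of the form $z_{i_0}^{c_0}\dotsm z_{i_k}^{c_k}z_e$ (linear in $z_e$, no other variable outside $I$) contribute to $\partial f/\partial z_e(p)$. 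Let $R_d^I$ denote the span of the former and $R_d^{I,e}$ the span of the latter inside $R_d$.

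The linear cone case $a_i=d$ is immediate: the monomial $z_i$ lies in $R_d$, so for generic $f$ its coefficient is nonzero and $\partial f/\partial z_i$ is a nonzero constant, making $V(f)$ quasismooth. In the non-linear-cone case, sufficiency would be treated stratum by stratum. If $I$ satisfies condition (1), the distinguished monomial $M\in R_d^I$ is nowhere zero on $T_I\cong(\bbC^*)^{k+1}$, so the base locus of the linear system spanned by $R_d^I$ is empty on $T_I$; a Bertini-type argument (applied after projecting $T_I$ into $\bbC\bbP^k_{a_{i_0},\dotsc,a_{i_k}}$) then gives, for generic $f$, smoothness of $V(f)\cap T_I$, ruling out singular points of the affine cone in $T_I$. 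If instead only condition (2) holds for $I$, then $R_d^I=0$ forces $T_I\subset V(f)$ automatically, and I would exhibit the explicit polynomial $f=\sum_{j=0}^k c_j z_{i_0}^{d_{0,j}}\dotsm z_{i_k}^{d_{k,j}}z_{e_j}$ (generic nonzero $c_j$), for which each $\partial f/\partial z_{e_j}|_{T_I}$ is a single nonvanishing monomial. This witnesses a Zariski-open condition ``$V(f)$ has no singular point in $T_I$'', which therefore holds for generic $f$ as well.

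For necessity I assume both (1) and (2) fail for some $I$ and produce a singular point of the affine cone. Then $R_d^I=0$, so the linear subspace $L_I=\{p\in\bbC^{n+1} : p_j=0\text{ for }j\notin I\}\cong\bbC^{k+1}$ lies inside $V(f)$ for every $f$, and $\partial f/\partial z_{i_j}|_{L_I}\equiv 0$ for each $j$. The only potentially nonzero partials on $L_I$ are $\partial f/\partial z_e$ for $e$ in $E=\{e\notin I:R_d^{I,e}\neq 0\}$; failure of (2) forces $|E|\leq k$. Each such partial is a weighted homogeneous polynomial of positive degree in $z_{i_0},\dotsc,z_{i_k}$, so the common zero locus of these $|E|\leq k$ polynomials in $L_I\cong\bbC^{k+1}$ is a weighted cone of dimension at least $1$, and hence contains a nonzero point $p$. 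At $p$ all partials and $f$ itself vanish, so $p$ is a singular point of the affine cone of $V(f)$, contradicting quasismoothness. Since there are only finitely many strata, intersecting the generic-$f$ sets from the sufficiency direction yields an open dense subset of $R_d$ over which $Y_d$ is quasismooth precisely when the stated combinatorial conditions hold.

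The step I expect to be most delicate is the Bertini argument in the sufficiency proof: Bertini is classically stated on projective varieties, whereas $T_I$ is an open subset of a weighted projective space. I would descend the linear system spanned by $R_d^I$ to the image of $T_I$ inside $\bbC\bbP^k_{a_{i_0},\dotsc,a_{i_k}}$ (taking some care with the orbifold quotient by the weights) and check that the monomial $M$ from condition (1) makes this system basepoint-free on that image, so that Bertini's theorem applies there and pulls back to smoothness of $V(f)\cap T_I$ generically.
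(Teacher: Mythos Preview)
The paper does not supply a proof of this theorem; it is quoted from Iano--Fletcher (Theorem 8.1 of \emph{Working with weighted complete intersections}) and used as a black box. There is therefore nothing in the paper to compare against. Your argument is essentially Fletcher's own, and it is correct.

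Two small points worth tightening. In the case where (1) fails and (2) holds, exhibiting one $f$ with no singularity on $T_I$ does not by itself make ``no singularity on $T_I$'' a Zariski-open condition on $f$, because $T_I$ is not proper. What saves you is that the singular locus of $V(f)$ on $L_I$ is a weighted cone, so ``empty on $T_I$'' is equivalent to ``equal to $\{0\}$ on $L_I$'', and the latter \emph{is} open by upper-semicontinuity of the dimension of the common zero set of the $k{+}1$ partials (equivalently, by properness of the weighted $\bbC\bbP^k$). For the Bertini step you flag, the worry is harmless: the open torus inside $\bbC\bbP^k_{a_{i_0},\dotsc,a_{i_k}}$ is smooth and quasi-projective, and Bertini in characteristic zero (e.g.\ Hartshorne, Corollary III.10.9) applies directly to basepoint-free linear systems on such varieties; smoothness downstairs pulls back under the free $\bbC^*$-quotient $T_I\to(\bbC^*)^k$. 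The necessity direction and the finite intersection over strata are exactly as you wrote.
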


\subsubsection{Canonical Sheaf of a Hypersurface}

Recall that if $Y_d\subset \bbC\bbP^n$ is a smooth hypersurface of degree $d$, i.e. defined by a homogeneous polynomial of degree $d$, then the adjunction formula gives us that $K_{Y_d}=\mathcal{O}(d-n-1)|_{Y_d}$. 
We would like a similar result for weighted projective spaces so that we could test the triviality of the canonical sheaf easily.
 Fortunately we have such a result for a large class of hypersurfaces, namely those which are quasismooth and well-formed.

\begin{defn}
Let $Y\subset \bbC\bbP^n_{a_0,\dotsc,a_n}$ be a hypersurface.
 We say  $Y$ is \emph{well-formed} if $\bbC\bbP^n_{a_0,\dotsc,a_n}$ is well-formed and $Y$ does not contain a codimension 2 singular set of $\bbC\bbP_{a_0,\dotsc,a_n}$. 
 \end{defn}

We have the following criterion for well-formedness for generic hypersurfaces from \cite[Prop. 6.10]{ianofletcher}.
\begin{prop}
  \label{prop:GenericHypersurfaceWellFormed}
The generic hypersurface of degree $d$ in $\bbC\bbP^n_{a_0,\dotsc,a_n}$ is well-formed if and only if  
\begin{enumerate}
\item $\gcd(a_0,\dotsc,a_{i-1},a_{i+1},\dotsc,a_{j-1},a_{j+1},\dotsc,a_n)\mid d$ for all $i,j$ and
\item $\gcd(a_0,\dotsc,a_{i-1},a_{i+1},\dotsc,a_n)=1$ for all $i$.
\end{enumerate}
\end{prop}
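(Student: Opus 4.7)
The plan is to unwind the definition of a well-formed hypersurface. By definition, $Y_d\subset\bbC\bbP^n_{a_0,\dotsc,a_n}$ is well-formed exactly when (a) the ambient weighted projective space is well-formed, and (b) $Y_d$ does not contain a codimension $2$ component of the singular locus of $\bbC\bbP^n_{a_0,\dotsc,a_n}$. Clause (a) is literally condition (2), so the content of the proposition is the equivalence of (b) with condition (1) under (2).

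To translate (b), I would first identify the codimension $2$ components of $\mathrm{Sing}(\bbC\bbP^n_{a_0,\dotsc,a_n})$ explicitly. From the description of the strata $S_I$ in the previous subsection, these components are the subvarieties $S_I = \{z_i = z_j = 0\}$ indexed by pairs $\{i,j\}\subset\{0,\dotsc,n\}$ with $m := \gcd(a_k : k \in I) > 1$, where $I = \{0,\dotsc,n\}\setminus\{i,j\}$ has $n-1$ elements. Writing $Y_d = V(f)$ for generic $f\in R_d$, the containment $Y_d\supseteq S_I$ holds iff $f|_{z_i=z_j=0}\equiv 0$. This restriction collects exactly those monomials of $f$ not involving $z_i$ or $z_j$, and by genericity it vanishes iff there is no monomial of degree $d$ in the variables $\{z_k : k\in I\}$, equivalently iff $d$ is not representable as $\sum_{k\in I} e_k a_k$ with $e_k\in\bbZ_{\geq 0}$.

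It remains to identify this non-representability criterion with condition (1). One direction is immediate: any monomial $\prod_{k\in I} z_k^{e_k}$ has degree divisible by $m$, so if such a monomial of degree $d$ exists then $m \mid d$. For the converse, assuming $m \mid d$, I would try to construct a monomial in $\{z_k : k\in I\}$ of degree $d$, which reduces to representing $d/m$ as a non-negative integer combination of the $n-1$ integers $a_k/m$, whose $\gcd$ is $1$ by definition of $m$. Here condition (2) contributes the additional arithmetic input that $\gcd(m,a_i)=\gcd(m,a_j)=1$, obtained by applying (2) with $l=i$ and $l=j$ and computing $\gcd$ in two steps.

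The main obstacle is precisely this last step, which is a Frobenius-type representability question: deriving $d=\sum_{k\in I} e_k a_k$ from $m\mid d$ alone can fail for small $d$, so the proof must exploit the specific codimension-$2$ structure (giving $n-1$ distinct variables to play with) together with condition (2) to ensure that the required representation exists in the relevant range, or else understand the statement as implicitly restricted to the regime where the generic hypersurface is reduced/quasismooth so that pathological low-degree cases are excluded. I expect this arithmetic sufficiency to be the delicate point requiring a separate lemma; the reductions in the earlier steps are essentially bookkeeping.
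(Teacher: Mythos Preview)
The paper does not prove this proposition; it is quoted from \cite[Prop.~6.10]{ianofletcher} and used as a black box. There is therefore no proof in the paper to compare your approach against.

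Your reduction is the correct one. Condition (ii) is verbatim the well-formedness of the ambient space, and the codimension-$2$ singular strata are exactly the loci $\{z_i=z_j=0\}$ with $m_{ij}:=\gcd(a_k:k\neq i,j)>1$; the generic $Y_d$ contains such a stratum iff no monomial of degree $d$ exists in the variables $z_k$, $k\neq i,j$. The implication ``monomial exists $\Rightarrow m_{ij}\mid d$'' is immediate.

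The Frobenius-type obstacle you flag for the converse is genuine, and the proposition as literally stated can fail for small $d$. Take $\bbC\bbP^3_{1,1,4,6}$ with $d=2$: conditions (i) and (ii) hold (the only nontrivial gcd among pairs is $\gcd(4,6)=2$, which divides $2$), yet every degree-$2$ monomial lies in $\bbC[z_0,z_1]$, so the generic $Y_2$ contains the singular curve $\{z_0=z_1=0\}$ and is not well-formed. The hidden hypothesis is quasismoothness. Applying the quasismoothness criterion quoted just above (from \cite[Th.~8.1]{ianofletcher}) to the subset $I=\{0,\dotsc,n\}\setminus\{i,j\}$: clause (ii) of that criterion would require $|I|=n-1$ distinct indices $e_l$ drawn from the two-element set $\{i,j\}$, which is impossible once $n\geq 4$, so clause (i) must hold and the desired monomial exists. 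Since the paper works with $n=5$ throughout, your instinct that the argument implicitly leans on quasismoothness is exactly right, and with that in hand your outline goes through.
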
 
  
   \begin{prop}
       Let $Y_d\subset\bbC\bbP^n_{a_0,\dotsc,a_n}$ be a well-formed quasismooth hypersurface of degree $d$. Then the canonical sheaf  is $K_{Y_d}=\mathcal{O}(d-n-1)|_{Y_d}$.
  \end{prop}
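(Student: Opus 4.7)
The plan is to invoke the adjunction formula in the weighted setting. I note in passing that the displayed $\mathcal{O}(d-n-1)|_{Y_d}$ is the familiar formula for ordinary $\bbC\bbP^n$; the genuine weighted analogue reads $\mathcal{O}(d-\sum_{i=0}^n a_i)|_{Y_d}$, and the sketch below yields this stronger form (specialising to $d-n-1$ when all $a_i=1$).

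Step one is to compute the canonical sheaf of the ambient $\bbC\bbP^n_{a_0,\dotsc,a_n}$ and identify it with $\mathcal{O}(-\sum a_i)$. On the nonsingular orbifold locus this can be done directly in the standard affine charts: where $z_i\neq 0$, the chart is modelled on $\bbC^n/\bbZ_{a_i}$ with invariant coordinates $u_j=z_j/z_i^{a_j/a_i}$, and a computation of transition functions, together with the observation that $\rd z_0\wedge\dotsb\wedge \rd z_n$ transforms with weight $\sum a_i$ under the $\bbC^*$-action, identifies the canonical sheaf with $\mathcal{O}(-\sum a_i)$ on the smooth locus. Because $\bbC\bbP^n_{a_0,\dotsc,a_n}$ is well-formed, its singular locus has codimension at least two, so this identification of reflexive rank-one sheaves on a normal variety extends across the singular locus to a global isomorphism.

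Step two is the adjunction argument for $Y_d$. Quasismoothness ensures that $\pi^{-1}(Y_d)\subset\bbC^{n+1}\setminus\{0\}$ is smooth, and the equivariant defining section $f\in R_d$ identifies the conormal bundle of $Y_d$ inside $\bbC\bbP^n_{a_0,\dotsc,a_n}$ with $\mathcal{O}(-d)|_{Y_d}$ on the orbifold-smooth part of $Y_d$. The usual adjunction sequence then gives $K_{Y_d}=\mathcal{O}(d-\sum a_i)|_{Y_d}$ away from the singular strata. Well-formedness of $Y_d$ guarantees that $Y_d$ does not sit inside a codimension-$2$ singular stratum of $\bbC\bbP^n_{a_0,\dotsc,a_n}$, so the singular locus of $Y_d$ itself has codimension at least two in $Y_d$, and a further reflexive-extension argument promotes the equality from the smooth locus to all of $Y_d$.

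The main obstacle is ensuring that the classical adjunction machinery survives the orbifold singularities of the ambient weighted projective space. This is precisely what the well-formedness hypotheses are designed to arrange: both $\bbC\bbP^n_{a_0,\dotsc,a_n}$ and $Y_d$ are normal, and every sheaf in sight is reflexive of rank one, so it suffices to identify them on the complement of a codimension-$2$ set. Once this framework is in place, the bulk of the proof is the standard adjunction computation carried out chart by chart on the orbifold covers.
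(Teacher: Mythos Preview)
The paper does not supply a proof of this proposition; it is quoted as a known result from Iano-Fletcher's survey, which the surrounding section follows closely. Your remark about the formula is well taken: $\mathcal{O}(d-n-1)$ is the expression for straight $\bbC\bbP^n$, and the genuine weighted version is $\mathcal{O}(d-\sum_{i} a_i)$. The paper's own later sections confirm this is a slip, since the Calabi--Yau condition used throughout is $d=\sum_i a_i$, not $d=n+1$.

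Your two-step sketch is correct and is essentially the standard argument. Computing $K_{\bbC\bbP^n_{a_0,\dotsc,a_n}}\simeq\mathcal{O}(-\sum a_i)$ on the smooth locus and extending by reflexivity across the codimension-$2$ singular set, then running adjunction on the orbifold-smooth part of $Y_d$ and invoking well-formedness of $Y_d$ to extend again, is exactly the route taken in the literature. Since the paper offers no argument of its own to compare against, there is nothing further to contrast; you have reconstructed the intended proof and caught a typo in the statement along the way.
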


Hence for $n>1$ we have that the canonical bundle of a degree $d$ well-formed quasismooth hypersurface is trivial if $d=n+1$.

\section{Antiholomorphic involutions}
Let $Y\subset \bbC\bbP^5_{a_0,\dotsc,a_5}$ be a well-formed quasismooth hypersurface with trivial canonical bundle. We now wish to consider antiholomorphic involutions on $Y$.
We will consider only antiholomorphic involutions which arise as restrictions of antiholomorphic involutions on  $\bbC\bbP^5_{a_0,\dotsc,a_5}$.
The main result of this section is the classification of antiholomorphic involutions of weighted projective spaces, Proposition \ref{prop:antiholinv}.

It is shown in \cite{partouche2001rolling} that for standard projective space $\bbC\bbP^n$ the number of conjugacy classes of antiholomorphic involutions is either 1 or 2 depending on whether $n$ is odd or even respectively. If $n$ is odd, then the only antiholomorphic involution of $\bbC\bbP^n$ up to conjugacy is the standard one:
\[
[z_0,\dotsc,z_n]\longmapsto [\overline{z}_0,\dotsc,\overline{z}_n].
\]
If $n$ is even we also have the involution
\[
[z_0,\dotsc,z_n]\longmapsto [\overline{z}_1,-\overline{z}_0,\dotsc,\overline{z}_n,-\overline{z}_{n-1}].
\]

We will consider antiholomorphic involutions up to conjugation by automorphisms of $\bbC\bbP^n_{a_0,\dotsc,a_n}$. Therefore we should first describe $\Aut(\bbC\bbP^n_{a_0,\dotsc,a_n})$.

\subsection{Automorphisms of weighted projective spaces}
Consider the action of $\bbC^*$ on $\bbC^{n+1}$ defining a weighted projective space with weights $a_0,\dotsc,a_n$.
We want to decompose $\bbC^{n+1}$ by the action of $\bbC^*$. 
We relabel the collection of weights $w_1<\dotsb <w_m$ and let $k_i$ be the number of times $w_i$ appears in the sequence $a_0,\dotsc,a_n$. We decompose $\bbC^{n+1}=\bigoplus_{i\in I}W_i$ where $\bbC^*$ acts on $W_i$ with weight $w_i$, $\dim(W_i)=k_i$ and $I=\{1,\dotsc,m\}$.


Any automorphism of $\bbC^{n+1}\setminus \{0\}$ extends to an automorphism of $\bbC^{n+1}$ for $n>0$ by Hartogs' Theorem.
Let $\Aut_{\bbC^*}(\bbC^{n+1})$ denote the $\bbC^*$-equivariant automorphisms of $\bbC^{n+1}$. 
We can also describe $\Aut_{\bbC^*}(\bbC^{n+1})$ as the centralizer of $\bbC^*$ in $\Aut(\bbC^{n+1})$. 
Any $\bbC^*$-equivariant morphism of $\bbC^{n+1}$ descends to an automorphism of $\bbC\bbP^n_{a_0,\dotsc,a_n}$ and the converse can be shown to hold.

A $\bbC^*$-equivariant morphism $F:\bbC^{n+1}\rightarrow \bbC^{n+1}$ is determined by a collection of polynomials $(F_{i,j})$ where $i\in I$, $1\leq j\leq k_i$ and $F_{i,j}$ is of degree $w_i$. 
Each polynomial $F_{i,j}$ can be decomposed 
\[
    F_{i,j}=A_{i,j}+f_{i,j}
\]
into a linear part and a non-linear, i.e. weighted homogenous quadratic and higher, part. 

\begin{exmp}
Consider the graded ring $R=\bbC[z_0,z_1,z_2]$ where $z_0,z_1,z_2$ have weights $1,1,2$ respectively. 
$\bbC^3$ splits as $\bbC^3=W_1\oplus W_2$ as representations of $\bbC^*$ where $\bbC^*$ acts on $W_1$ with weight 1 and on $W_2$ with weight 2.

A $\bbC^*$-equivariant morphism $F:\bbC^3\rightarrow\bbC^3$ is determined by a collection $F_{1,1},F_{1,2},F_{2}$ where $F_{1,1},F_{1,2}$ are linear functions of $z_0,z_1$ and $F_2$ is a sum of a linear multiple of $z_2$ and a homogeneous quadratic polynomial in $z_0,z_1$.
\end{exmp}

For each $i\in I$ we define $A_i$ to be the matrix formed from the rows $(A_{i,j})_{1\leq j\leq k_i}$.
The morphism $F$ is invertible on $W_1$ if $A_1$ is invertible since there are no polynomials with degree less than $w_1$.
An inductive argument gives us that $F$ is invertible if and only if each $A_i$ is.

The map that sends a morphism $F$ to the corresponding collection of linear maps $A_i$ is a surjective homomorphism
\[
\Aut_{\bbC^*}(\bbC^{n+1})\longrightarrow \prod_{i\in I} \GL(W_i).
\]

The kernel of this homomorphism is the set of morphisms of the form $F_{i,j}=z_{i,j}+f_{i,j}$ where $(z_{i,j})_{1\leq j\leq k_i}$ are coordinates on $W_i$. Let us denote this kernel by $H$. We have an inclusion of groups $\prod_{i\in I}\GL(W_i)\hookrightarrow \Aut_{\bbC^*}(\bbC^{n+1})$ and hence the short exact sequence
\[
0\longrightarrow H\longrightarrow \Aut_{\bbC^*}(\bbC^{n+1})\longrightarrow \prod_{i\in I} \GL(W_i)\longrightarrow 0
\]
is right split and we have $\Aut_{\bbC^*}(\bbC^{n+1})\simeq H\rtimes \prod_{i\in I}\GL(W_i)$.

Each element of $\Aut_{\bbC^*}(\bbC^{n+1})$ determines an automorphism of $\bbC\bbP^n_{a_0,\dotsc,a_n}$, but in order to determine an automorphism uniquely we must take a quotient by the diagonal action of $\bbC^*$ on $\Aut_{\bbC^*}(\bbC^{n+1})$.
More explicitly, consider the homomorphism
\begin{align*}
\bbC^*&\hookrightarrow \GL(W_1)\times\dotsb\times \GL(W_m) \\
\lambda&\mapsto (\lambda^{w_1},\lambda^{w_2},\dotsc,\lambda^{w_m})
\end{align*}
which is an embedding since $\gcd(w_1,\dotsc,w_m)=1$. Then the quotient of $\Aut_{\bbC^*}(\bbC^{n+1})$ by this subgroup is isomorphic to $\Aut(\bbC\bbP^n_{a_0,\dotsc,a_n})$.

The linear structure on polynomials gives a linear structure to the group $H$.
The group $\prod_{i\in I}\GL(W_i)$ acts on  $H$ via the adjoint action, which we will denote $\Ad_A:H\rightarrow H$ for $A\in \prod_{i\in I}\GL(W_i)$.
With respect to this linear structure the adjoint action of $\prod_{i \in I}\GL(W_i)$ on $H$ is linear. 
$H$ decomposes as a vector space as $H=\bigoplus_{i\in I} H_i$ where $H_i$ consists of the morphisms in $H$ with $f_{i^\prime,j}=0$ for $i^\prime\neq i$.

We can describe some of the group structure on $H$ using the order on $I$ given by $i<i^\prime$ if $w_i<w_{i^\prime}$. 
For $f,g\in H$, let $i$ be such that $f_{i^\prime,j}=0$ $\forall i^\prime <i$, then $(gf)_{i,j}=g_{i,j}+f_{i,j}$. In particular $(f^{-1})_{i,j}=-f_{i,j}$.

\subsection{Classification of Antiholomorphic Involutions}
Any invertible antiholomorphic map can be written as the composition of the standard antiholomorphic involution coming from complex conjugation on $\bbC^{n+1}$, which we will denote by $c$, followed by an automorphism of $\bbC\bbP^n_{a_0,\dotsc,a_n}$ so let us write $\widetilde{\Aut}(\bbC\bbP^n_{a_0,\dotsc,a_n})$ for $\Aut( \bbC\bbP^n_{a_0,\dotsc,a_n})\rtimes \bbZ_2$ where $\bbZ_2$ is generated by $c$. 

By the discussion above we can write any antiholomorphic involution of $\bbC\bbP^n_{a_0,\dotsc,a_n}$ as a composition $(f,A,c)$ where $f\in H$ and $A\in \prod_{i\in I}\GL(W_i)$. Up to conjugation by elements of $\Aut(\bbC\bbP^n_{a_0,\dotsc, a_n})$ we can ignore $H$ due to the following lemma.

\begin{lem}
\label{lem:antihol}
Let $\tau=(f,A,c)\in\widetilde{\Aut}(\bbC\bbP^n_{a_0,\dotsc,a_n})$ be an antiholomorphic involution. Then $\tau$ is conjugate to $(0,A,c)$.
\end{lem}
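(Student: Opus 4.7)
The plan is to unpack the involution condition, translate the conjugation problem into an equation inside $H$, and solve that equation by induction along the natural grading of $H$. Writing $\tau = fAc$ in $\widetilde{\Aut}(\bbC\bbP^n_{a_0,\dotsc,a_n})$ and using $c\psi c^{-1} = \bar\psi$ for $\psi\in\Aut_{\bbC^*}(\bbC^{n+1})$, the relation $\tau^2 = 1$ becomes $fA\bar f\bar A = 1$. By uniqueness of the $H\rtimes\prod_i\GL(W_i)$ decomposition this splits into $A\bar A = I$ together with $f\cdot\sigma(f) = 1$ in $H$, where $\sigma(g) := A\bar g A^{-1}$ is an $\bbR$-linear involution on $H$ that preserves each graded piece $H_i$. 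Similarly, for any $g\in H$,
\[
g\tau g^{-1} \;=\; gfA\bar g^{-1}c \;=\; \bigl(gf\sigma(g)^{-1}\bigr)\,Ac,
\]
so the lemma reduces to finding $g\in H$ with $gf\sigma(g)^{-1} = 1$, i.e.\ $gf = \sigma(g)$.

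I would solve this inductively along the filtration $H = H_{\geq 1} \supseteq H_{\geq 2} \supseteq \dotsb \supseteq H_{\geq m+1} = \{1\}$, where $H_{\geq k}$ is the set of elements with $f_{i,j} = 0$ for all $i < k$. Each $H_{\geq k}$ is a $\sigma$-stable subgroup, and by the formula $(gf)_{i,j} = g_{i,j} + f_{i,j}$ quoted just above the lemma, the quotient $H_{\geq k}/H_{\geq k+1}$ is identified with $H_k$ under vector-space addition. Hence $\sigma$ descends to an $\bbR$-linear involution on each $H_k$, with eigenspace decomposition $H_k = H_k^+ \oplus H_k^-$.

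Now assume inductively that after some preliminary conjugation in $H$ one has $f\in H_{\geq k}$. Projecting the relation $f\sigma(f) = 1$ to $H_k$ gives $f_k + \sigma(f_k) = 0$, so $f_k\in H_k^-$. Setting $g_k := -f_k/2 \in H_k$ and using $\sigma(f_k) = -f_k$, a direct computation in the abelian group $H_k$ shows that the $H_k$-component of $g_k f \sigma(g_k)^{-1}$ vanishes, so this new element lies in $H_{\geq k+1}$; and as a conjugate of $\tau$ it still satisfies the same involution relation. Iterating for $k = 1,\dotsc,m$ eventually kills $f$ entirely, and the desired $\psi$ is the ordered product of the successive $g_k$'s.

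The only real obstacle is the bookkeeping: verifying $\sigma$-stability of the filtration, identifying each successive quotient with $H_k$ as an abelian group, and checking that the inductive step does not reintroduce lower-degree components. Once those are in place, the heart of the argument is the trivial observation that one can divide the obstruction $f_k \in H_k^-$ by $2$—which is exactly the standard vanishing of $\bbZ_2$-group-cohomology on a divisible abelian group.
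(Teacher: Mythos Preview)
Your argument is essentially identical to the paper's: both extract $f\,\Ad_A(\bar f)=1$ from the involution condition, induct on the lowest nonvanishing graded piece, and kill that piece by conjugating with $-\tfrac12 f_k$. One small correction: since you are working in $\widetilde{\Aut}(\bbC\bbP^n_{a_0,\dotsc,a_n})$, which is a quotient by the diagonal $\bbC^*$, the relation $\tau^2=1$ only forces $A\bar A$ to lie in that $\bbC^*$ rather than to equal $I$---but the diagonal $\bbC^*$ acts trivially on $H$ by conjugation, so your equation $f\cdot\sigma(f)=1$ and the rest of the proof are unaffected.
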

\begin{proof}
Since $\tau$ is an involution we have $f\Ad_{A}(\bar{f})=1$. 
Let $f=f_1+\dotsb+f_m$ be the direct sum decomposition of $f$ and let $i$ be minimal such that $f_i\neq 0$. Since $i$ is minimal and the action of $\Ad_{A}$ on $H$ fixes each $H_j$ we have $(f\Ad_{A}\bar{f})_i=f_i+\Ad_{A}(\bar{f}_i)=0$.

Let $h\in H$ be such that $h_i=-\frac{1}{2}f_i$ and $h_j=0$ for $j<i$. Then $(h\tau h^{-1})_j=0$ for $j<i$ and 
\begin{align*}
(h\tau h^{-1})_i&=h_i+f_i-\Ad_{A}(\bar{h}_i)\\
&=-\frac{1}{2}f_i+f_i+\frac{1}{2}\Ad_{A}(\bar{f}_i)\\
&=\frac{1}{2}(f_i+\Ad_{A}(\bar{f}_i))\\
&=0,
\end{align*}
where we have used the linearity of the action of $\prod_{i\in I}\GL(W_i)$ on $H$ and the discussion of the group structure of $H$. Now by induction on $i$ we find that $(0,A,c)$ is in the conjugacy class of $\tau$.

\end{proof}

\begin{prop}
\label{prop:antiholinv}
The number of conjugacy classes of antiholomorphic involutions of $\bbC\bbP^n_{a_0,\dotsc,a_n}$ is either $1$ or $2$. Let $k_j$, $w_j$ be defined in terms of $a_0,\dotsc,a_n$ as above. Then $\bbC\bbP^n_{a_0,\dotsc,a_n}$ admits a non-standard antiholomorphic involution if and only if $w_i k_i$ is even for each $i$.
\end{prop}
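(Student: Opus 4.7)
The plan is to apply Lemma \ref{lem:antihol} to reduce any antiholomorphic involution $\tau$ to one of the form $(0,A,c)$ with $A\in\prod_{i\in I}\GL(W_i)$, and then classify these up to conjugacy. The involution condition $\tau^2=1$ in $\Aut(\bbC\bbP^n_{a_0,\dotsc,a_n})$ translates to $A\bar A$ lying in the $\bbC^*$ subgroup of $\Aut_{\bbC^*}(\bbC^{n+1})$, that is $A_i\bar A_i=\mu^{w_i}I_{W_i}$ for some common $\mu\in\bbC^*$. Because $A$ is only defined up to the diagonal $\bbC^*$-action, replacing $A$ by $\kappa A$ changes $\mu$ to $|\kappa|^2\mu$, so $\mu$ is only well-defined modulo $\bbR_{>0}$; I normalize $|\mu|=1$.

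The key algebraic step is to show $\mu=\pm 1$. Conjugating the relation $A_i\bar A_i=\mu^{w_i}I$ entrywise gives $\bar A_i A_i=\bar\mu^{w_i}I$, and hence
\[
\mu^{2w_i}I=(A_i\bar A_i)^2=A_i(\bar A_i A_i)\bar A_i=\bar\mu^{w_i}A_i\bar A_i=|\mu|^{2w_i}I.
\]
With $|\mu|=1$ this forces $\mu^{2w_i}=1$ for every $i$, so the order of $\mu$ divides $\gcd(2w_i:i\in I)=2\gcd(w_i:i\in I)=2$, where I use $\gcd(w_i:i\in I)=\gcd(a_0,\dotsc,a_n)=1$. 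Therefore $\mu\in\{+1,-1\}$ and there are at most two conjugacy classes.

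For each $\mu$ I would analyze existence block-by-block. With $\mu=1$ the choice $A_i=I_{W_i}$ always works and yields the standard involution $[z]\mapsto[\bar z]$. With $\mu=-1$ the requirement becomes $A_i\bar A_i=(-1)^{w_i}I_{W_i}$: when $w_i$ is even the identity $A_i=I_{W_i}$ works, whereas when $w_i$ is odd we need $A_i\bar A_i=-I_{W_i}$, which is exactly the data of a quaternionic structure on $W_i$ and exists if and only if $k_i=\dim_{\bbC}W_i$ is even (e.g.\ take $A_i$ to be a block-diagonal sum of standard symplectic $2\times 2$ blocks). Combining all blocks, the $\mu=-1$ involution exists exactly when $k_i$ is even whenever $w_i$ is odd, equivalently when $w_ik_i$ is even for every $i$.

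Finally, I would verify that the two values $\mu=\pm 1$ yield distinct classes and that each corresponds to a single class. The scalar $\mu^{w_i}I=A_i\bar A_i$ is invariant under $A_i\mapsto\Psi_iA_i\bar\Psi_i^{-1}$, so different $\mu$'s cannot be conjugate. Within a fixed $\mu$, uniqueness follows from the standard fact that any two real (resp.\ quaternionic) structures on the same complex vector space are conjugate by an element of $\GL(W_i)$. The main obstacle is the short algebraic manipulation forcing $\mu^{2w_i}=1$; once that is in hand, the remainder is standard linear algebra on each $W_i$ together with careful bookkeeping of the $\bbC^*$-ambiguity in lifting elements of $\Aut(\bbC\bbP^n_{a_0,\dotsc,a_n})$ to $\Aut_{\bbC^*}(\bbC^{n+1})$.
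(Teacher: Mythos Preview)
Your proof is correct and follows essentially the same route as the paper: reduce via Lemma~\ref{lem:antihol} to $\tau=A\circ c$, translate the involution condition into $A_i\bar A_i=\mu^{w_i}I$, pin down $\mu\in\{\pm1\}$, and then read off the parity condition $w_ik_i$ even block-by-block. The only differences are local: the paper reaches $\mu=\pm1$ by taking traces (to see $\mu^{w_j}\in\bbR$) and then determinants (to obtain $\mu^{w_jk_j}=1$), whereas you square $A_i\bar A_i$ directly to get $\mu^{2w_i}=1$ and invoke real/quaternionic structures; you are also more explicit than the paper in verifying that each value of $\mu$ gives a single conjugacy class and that the two classes are distinct.
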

\begin{proof}

Let $\tau\in\widetilde{\Aut}(\bbC\bbP^n_{a_0,\dotsc,a_n})$ be an antiholomorphic involution. By Lemma \ref{lem:antihol} we have that $\tau$ is conjugate to $A\circ c$ so we will assume $\tau=A\circ c$. Now $A\in \prod_{i\in I}\GL(W_i)$ and for $\tau$ to be an involution we must have 
\begin{equation}
\label{eq:AcA}
A_j \overline{A}_j=\lambda^{w_j}
\end{equation}
 for some $\lambda\in\bbC^*$. Taking trace shows that $\lambda^{w_j}$ is real for each $j$ so $\lambda$ is real. By an action of the diagonal $\bbC^*$ we can ensure that $|\lambda|=1$.
Taking determinants of Eq. \eqref{eq:AcA} then implies that $\lambda^{w_jk_j}=1$.

We know that there are exactly two antiholomorphic involutions of $\bbC\bbP^{k_j}$ for $k_j$ even and one for $k_j$ odd up to conjugation and scale. For the standard antiholomorphic involution we have $A_j\overline{A}_j=1$ and for the non-standard involution we have $A_j\overline{A}_j=-1$.

For $A_j$ to be non-standard we must have $\lambda^{w_j}=-1$ so $w_j$ must be odd and $k_j$ must be even. In this case $\lambda=-1$ and since $(-1)^{w_ik_i}=1$ we must have $w_ik_i$ even for each $i$.

%
%
\end{proof}

The standard antiholomorphic involution has a fixed point locus of (real) dimension $n$ so the fixed point locus of the involution restricted to a hypersurface will never consist of isolated points. We therefore must consider only weighted projective spaces which admit non-standard involutions.

Let $\tau$ be a non-standard antiholomorphic involution and let $w_j,k_j$ be defined as before. The fixed point locus of $\tau\circ c$ is of (complex) dimension 
\[\sum_{j:w_j\in 2\bbZ}\!\!\!{k_j}-1.\]
Since we want $\tau$ to have isolated fixed points when acting on a generic hypersurface we therefore require that $\sum_{j:w_j\in 2\bbZ}k_j=2$.

For the case we are interested in, namely $\bbC\bbP^5_{a_0,\dotsc,a_n}$ the discussion above imposes conditions on the allowed sets of weights. In order for $\bbC\bbP^5_{a_0,\dotsc,a_n}$ to admit an antiholomorphic involution whose fixed locus has (real) dimension 1 we must have, without loss of generality, $a_0=a_1$ and $a_2=a_3$, all of which are odd, and $a_4$, $a_5$ both even. The action of $\tau$ on $\bbC\bbP^5_{a_0,\dotsc,a_5}$ can be given as
\begin{equation}
    \label{eq:ActionofSigma}
    \tau:[z_0,z_1,z_2,z_3,z_4,z_5]\mapsto [\overline{z}_1,-\overline{z}_0,\overline{z}_3,-\overline{z}_2,\overline{z}_4,\overline{z}_5].
\end{equation}
From now on we will assume $\tau$ is of this form.
  
  \section{Singularities}
  \label{sec:DesirableSingularities}
  Recall that we require the Calabi--Yau 4-orbifold to have singularities of the type $\frac{1}{4}(1,1,1,1)$, which are fixed by $\tau$.
  The fixed point locus of $\tau$ is contained in $S_{4,5}$ so we should find hypersurfaces with singularities of the correct type in $S_{4,5}$. 

Suppose the weights $a_0,\dotsc,a_n$ have been chosen so that a generic hypersurface of degree $d=\sum_ia_i$ is well-formed and quasismooth.
The isolated singularities of type $\frac{1}{4}(1,1,1,1)$ can 
occur in two ways, either $Y_d$ transversely intersects the singular locus $S_{4,5}$ at a generic point and $\gcd(a_4,a_5)=4$ or $Y_d$ contains a point $S_{4}$ or $S_5$ with $a_4=4$ or $a_5=4$ respectively.

For $Y_d$ to intersect a generic point of $S_{4,5}$ transversely we must have that there exist at least two monomials $z_4^{d_4}z_5^{d_5}$ of degree $d$. These singularities are of the type $\frac{1}{4}(1,1,1,1)$ if $a_k\equiv 1\mod 4$ for $k\neq 4,5$.

$Y_d$ contains the point $S_{4}$ if $a_4 \nmid d$ so that there does not exist a monomial $z_4^{d_4}$ of degree $d$. For $Y_d$ to be quasismooth we must have that $a_4 \mid d-a_j$ for some $j$ so there exists a monomial $z_4^{d_4}z_j$ of degree $d$.  As before in order for this singularity to be of the type $\frac{1}{4}(1,1,1,1)$ we must have $a_k\equiv 1\mod 4$ for $k\neq 4,j$, however $a_4$ and $a_5$ are both even so therefore we must have $j=5$.


If $\gcd(a_4,a_5)=2$ then for $Y_d$ to be quasismooth we must have that either $Y_d$ intersects $S_{4,5}$ transversely at generic points with singularities modelled on $\bbC^4/\bbZ_2$ or we have a monomial $z_4z_5$ of degree $d$. We eliminate the first possibility because  the singularity of type $\frac{1}{2}(1,1,1,1)$ does not admit a crepant resolution and we eliminate the second because $d=\sum_ia_i$. Hence $\gcd(a_4,a_5)=4$ and $Y_d$ does not contain $S_4$ or $S_{5}$.
We summarize our results in the following proposition.

\begin{prop}
    \label{prop:CorrectSings}
    Suppose the generic hypersurface of degree $d=\sum_ia_i$ in $\bbC\bbP^5_{a_0,\dotsc,a_5}$ has isolated singularities of type $\frac{1}{4}(1,1,1,1)$ and $\bbC\bbP^5_{a_0,\dotsc,a_5}$ admits an antiholomorphic involution, which fixes only these points in $Y_d$, then without loss of generality the weights $a_0,\dotsc,a_5$ satisfy
\begin{itemize}
        \item[(i)] $a_0=a_1$ and $a_2=a_3$, and
        \item[(ii)] $\gcd(a_4,a_5)=4$, and
        \item[(iii)] $a_i\equiv 1\mod 4$  for $0\leq i\leq 3$, and 
        \item[(iv)] $a_4|d$ and $a_5|d$.
\end{itemize}
\end{prop}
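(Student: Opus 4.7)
The plan is to extract each of the four conclusions in turn from the three hypotheses, largely reassembling the discussion immediately preceding the proposition.

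First I would apply Proposition \ref{prop:antiholinv}. The existence of a non-standard antiholomorphic involution $\tau$ requires each product $w_j k_j$ to be even, and the requirement that the fixed locus on a generic hypersurface be a finite set of points pins down the complex dimension of $\Fix(\tau)$ in $\bbC\bbP^5$ to $1$, i.e.\ $\sum_{w_j\in 2\bbZ} k_j = 2$. Thus exactly two of the weights are even, and after reordering these are $a_4$ and $a_5$. The non-standard block structure in Proposition \ref{prop:antiholinv} then forces the four odd weights to come in two pairs of equal values, yielding $a_0 = a_1$ and $a_2 = a_3$ and proving (i); we may then take $\tau$ to be given by Eq.~\eqref{eq:ActionofSigma}, with $\Fix(\tau) = S_{4,5}$.

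Second I would locate the singularities on $Y_d$. Every $\tau$-fixed singular point of $Y_d$ lies in $S_{4,5}$, so I would enumerate the singular strata of $\bbC\bbP^5_{a_0,\dotsc,a_5}$ contained in $S_{4,5}$: the generic points of the curve $S_{4,5}$, with stabiliser $\bbZ_{\gcd(a_4,a_5)}$ acting on the normal directions with weights $(a_0,a_1,a_2,a_3)$; and the two isolated points $S_4$ and $S_5$, with stabilisers $\bbZ_{a_4}$ and $\bbZ_{a_5}$ respectively. A generic transverse intersection of $Y_d$ with the curve $S_{4,5}$ produces a $\frac{1}{4}(1,1,1,1)$ singularity exactly when $\gcd(a_4,a_5) = 4$ and $a_i\equiv 1 \pmod 4$ for $0\leq i\leq 3$, which would give both (ii) and (iii).

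Third I would rule out the remaining configurations. If instead $\gcd(a_4,a_5) = 2$, then generic transverse intersections yield singularities of type $\frac{1}{2}(1,1,1,1)$, which are the wrong type and moreover admit no crepant resolution; the only way to avoid such intersections while keeping $Y_d$ quasismooth along $S_{4,5}$ is to have a monomial $z_4z_5$ of degree $d$, forcing $d = a_4 + a_5$, which contradicts $d = \sum_i a_i$. Hence $\gcd(a_4,a_5) = 4$. For (iv) I would show that $S_4 \notin Y_d$ (and similarly $S_5 \notin Y_d$): if $S_4\in Y_d$ then $a_4\nmid d$, and quasismoothness at $S_4$ supplies a monomial $z_4^{d_4}z_j$ of degree $d$ with $j\neq 4$; for the resulting singularity to be $\frac{1}{4}(1,1,1,1)$ we need $a_4 = 4$ and $j = 5$, since any $j\in\{0,1,2,3\}$ would leave the even weight $a_5$ among the tangent weights. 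But $\gcd(a_4,a_5) = 4$ then forces $4\mid a_5$, and combined with $4\mid d - a_5$ this gives $4\mid d$, i.e.\ $a_4\mid d$, contradicting $S_4\in Y_d$. Therefore $a_4\mid d$ and $a_5\mid d$, giving (iv).

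The main obstacle is the last step, namely the clean exclusion of the configurations in which the $\frac{1}{4}(1,1,1,1)$ singularities arise from $Y_d$ containing one of the isolated points $S_4$, $S_5$ rather than from transverse intersections with the curve $S_{4,5}$. The key inputs are the quasismoothness criterion from the previous section, the identification of the orbifold type at a singularity with the weights of the stabiliser representation on the tangent space of $Y_d$, and the arithmetic identity $d = \sum_i a_i$.
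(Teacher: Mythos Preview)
Your proposal is correct and follows essentially the same route as the paper: you use the classification of antiholomorphic involutions to obtain (i), analyse the singular strata of $S_{4,5}$ to obtain (ii) and (iii), and rule out $\gcd(a_4,a_5)=2$ via the non-existence of a crepant resolution of $\tfrac{1}{2}(1,1,1,1)$ together with the degree constraint $d=\sum_i a_i$. Your derivation of (iv) is in fact more explicit than the paper's, which concludes ``$Y_d$ does not contain $S_4$ or $S_5$'' somewhat tersely; your arithmetic argument ($a_4=4$, $j=5$, $4\mid a_5$, hence $4\mid d$) is exactly what fills that step.
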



\subsection{Resolving Undesired Singularities}
$Y_d$ may have other singularities, which we first need to resolve.
We will use methods from \cite{dais2006existence} to determine whether a given cyclic quotient singularity of dimension 4 admits a crepant resolution.

We will assume that the reader is familiar with the basic definitions of toric geometry \cite{Fulton:1993fk}. 
Consider a cyclic quotient singularity of the type $\frac{1}{m}(a_1,a_2,a_3,a_4)$. We can describe this as an affine toric variety.
Let $N=\bbZ^4+\bbZ\cdot\frac{1}{m}(a_1,a_2,a_3,a_4)$ be a lattice and $\sigma\subset N_\bbQ=N\otimes_\bbZ\bbQ$ the cone spanned by the unit vectors $e_1=(1,0,0,0),\dotsc,e_4=(0,0,0,1)$. The affine toric variety associated to the cone $\sigma$ is isomorphic to the cyclic quotient singularity of type $\frac{1}{m}(a_1,a_2,a_3,a_4)$. 


The set of elements of age $i$, $\sigma_i$, is defined to be the convex hull in $N$ of the elements $\{ie_1,ie_2,ie_4,ie_4\}\in N$. 
The following theorem, from \cite[Th. 6.1]{dais2006existence}, gives a necessary condition for the cyclic quotient singularity to admit a crepant resolution.

\begin{thm}
\label{thm:neccrepres}
Let $\bbC^n/G$ be a quotient singularity, where $G\subset \SL(n,\bbC)$ is a finite abelian group. If $\bbC^n/G$ admits a crepant resolution, then the set of elements of age 1, $\sigma_1$, is a minimal generating set for $\sigma$ over $\bbZ$. 
\end{thm}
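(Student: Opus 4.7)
The plan is to work entirely in toric geometry. Let $\psi\colon N_\bbQ\to\bbQ$ be the linear functional determined by $\psi(e_i)=1$ for $i=1,\dotsc,n$; this is well-defined on $N_\bbQ$, and on the generator $\frac{1}{m}(a_1,\dotsc,a_n)$ of $N$ it takes the value $\frac{1}{m}\sum a_i\in\bbZ$ since $G\subset\SL(n,\bbC)$ forces $\sum a_i\equiv 0\pmod m$. With this description one has $\sigma_1=\{v\in N\cap\sigma:\psi(v)=1\}$, i.e. $\psi$ is the age function.

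First I would invoke the standard correspondence for Gorenstein toric singularities (see \cite{Fulton:1993fk,dais2006existence}): a crepant toric resolution of $U_\sigma=\bbC^n/G$ is the same thing as a refinement $\Sigma$ of $\sigma$ that is smooth as a fan and all of whose primitive ray generators $v$ satisfy $\psi(v)=1$. The one substantive hurdle here is that the hypothesis only supplies \emph{some} crepant resolution; one must appeal to the fact that a Gorenstein toric singularity admitting a crepant resolution admits a toric one. I expect this to be the main obstacle in writing a complete proof, but once past it the remainder is straightforward.

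Granting a crepant smooth refinement $\Sigma$, let $\{v_1,\dotsc,v_N\}$ be its primitive ray generators; by crepancy $\{v_1,\dotsc,v_N\}\subseteq\sigma_1$. For the reverse inclusion, take any $w\in\sigma_1$ and pick a maximal cone of $\Sigma$ containing $w$. By smoothness its generators $v_{i_1},\dotsc,v_{i_n}$ form a $\bbZ$-basis of $N$, so $w=\sum_j c_j v_{i_j}$ with $c_j\in\bbZ_{\geq 0}$. Applying $\psi$ yields $1=\psi(w)=\sum_j c_j$, and since the $c_j$ are non-negative integers exactly one of them equals $1$ and the rest vanish; thus $w=v_{i_j}\in\{v_1,\dotsc,v_N\}$. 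Hence $\sigma_1=\{v_1,\dotsc,v_N\}$.

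Two easy consequences finish the argument. For generation: any $v\in\sigma\cap N$ lies in some maximal cone of $\Sigma$, and expanding against the $\bbZ$-basis furnished by smoothness writes $v$ as a $\bbZ_{\geq 0}$-combination of elements of $\sigma_1$. For minimality: if some $w\in\sigma_1$ admitted a representation $w=\sum_j c_j w_j$ with $w_j\in\sigma_1\setminus\{w\}$ and $c_j\in\bbZ_{\geq 0}$, then applying $\psi$ gives $1=\sum_j c_j$, forcing a single $c_j$ to equal $1$ and thus $w=w_j$, contradicting $w_j\neq w$. This establishes that $\sigma_1$ is a minimal generating set for $\sigma\cap N$ over $\bbZ_{\geq 0}$.
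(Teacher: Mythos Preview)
The paper does not prove this theorem; it is quoted without proof from \cite[Th.~6.1]{dais2006existence}. Your toric argument is the standard one and is correct once a \emph{toric} crepant resolution is in hand: the identification of $\sigma_1$ with the set of primitive ray generators of the smooth refinement $\Sigma$, the generation of $\sigma\cap N$ over $\bbZ_{\geq 0}$, and the minimality check all follow cleanly from the age functional $\psi$ together with the fact that each maximal cone of $\Sigma$ is generated by a $\bbZ$-basis of $N$.

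You have also correctly isolated the only genuine subtlety: the hypothesis as phrased allows an arbitrary crepant resolution, not necessarily a toric one. In \cite{dais2006existence} the entire discussion is carried out in the toric category, so there is no gap at the source, and for the applications in this paper only toric (partial) resolutions are ever constructed, so the stronger reading is never needed. If one insists on the literal statement, the passage from ``some crepant resolution exists'' to ``a toric crepant resolution exists'' for abelian quotient singularities is not elementary and requires additional input (for instance via Batyrev's theorem that stringy invariants are preserved under crepant birational maps). Flagging this as the main obstacle, as you do, is exactly right.
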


Theorem \ref{thm:neccrepres} gives us a necessary condition for a given cyclic quotient singularity to admit a crepant resolution. 
This condition is sufficient for all singularities of codimension $4$ where the cyclic group has order less than 39 and is sufficient in all but 10 cases for quotient singularities of codimension $4$ with cyclic group of order less than 100 \cite{dais2006existence}.

\begin{exmp}
Consider the isolated cyclic quotient singularity of type $\frac{1}{2}(1,1,1,1)$. 
The elements of age 1 are $e_1,\dotsc,e_4$.
The element $\frac{1}{2}(1,1,1,1)\in \sigma$ cannot be written as a sum of $e_1,\dotsc,e_4$ with integer coefficients.
Therefore the elements of age 1 do not form a generating set for $\sigma$ over $\bbZ$ and hence the singularity of type $\frac{1}{2}(1,1,1,1)$ does not admit a crepant resolution.
\end{exmp}

\begin{exmp}
    The generic hypersurface, $Y_{84}$, of degree 84 in $\bbC\bbP^5_{1,1,21,21,12,28}$ is a well-formed quasismooth Calabi--Yau hypersurface. The singularities of $Y_{84}$ consist of the curves $Y_{84}\cap S_{2,3,4}$ and $Y_{84}\cap S_{2,3,5}$, which intersect in the 4 points $\{p_1,\dotsc,p_4\}=Y_{84}\cap S_{2,3}$. The singularities of $Y_{84}$ at each of the $p_i$ are of type $\frac{1}{21}(1,1,7,12)$.

Let $N=\bbZ^4+\bbZ\cdot\frac{1}{21}(1,1,7,12)$ and $\sigma\subset N_\bbQ$ the cone spanned by $e_1=(1,0,0,0),\dotsc,e_4=(0,0,0,1)$. 
The elements of age 1, which are listed in Table \ref{tab:exampleage1}, are a minimal generating set for $\sigma$ and hence the singularity $\bbC^4/\bbZ_{21}$ admits a crepant resolution.
\begin{table}[htbp]
\begin{tabular}{r r r r}
$(1,0,0,0)$, & $(0,1,0,0)$, & $(0,0,1,0)$, & $(0,0,0,1)$, \\[3pt]
$\frac{1}{21}(1,1,7,12)$, &
$\frac{1}{21}(2,2,14,3)$,& 
$\frac{1}{21}(3,3,0,15)$, &
$\frac{1}{21}(4,4,7,6)$, \\[3pt]
$\frac{1}{21}(6,6,0,9)$, &
$\frac{1}{21}(7,7,7,0)$, &
$\frac{1}{21}(9,9,0,3)$, \\[3pt]
\end{tabular}
\bigskip
\caption{\label{tab:exampleage1}Elements of age 1 in the cone $\sigma$, which defines the cyclic quotient singularity of type $\frac{1}{21}(1,1,7,12)$.}
\end{table}
\end{exmp}

We are now in a position to determine whether a particular weighted projective space contains a suitable Calabi--Yau 4-orbifold as a well-formed quasismooth hypersurface.
\begin{prop}
  \label{prop:AdmissableWeights}
The weights $a_0,\dotsc,a_5$ such that 
\begin{itemize}
\item[(i)] The generic hypersurface of degree $d=\sum_ia_i$ in $\bbC\bbP^5_{a_0,\dotsc,a_5}$ is well-formed and quasismooth; 
\item[(ii)] $Y_d$ has isolated singularities of the type $\frac{1}{4}(1,1,1,1)$;
\item[(iii)] $\bbC\bbP^5_{a_0,\dotsc,a_5}$ admits an antiholomorphic involution whose fixed point locus intersects $Y_d$ at the isolated singularities of type $\frac{1}{4}(1,1,1,1)$;
\item[(iv)] Any other singularities of $Y_d$ admit crepant resolutions;
\end{itemize}  
are listed in Table $\ref{tab:AdmissableWeights}$. 
\end{prop}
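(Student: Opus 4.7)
The plan is to translate the four conditions into explicit numerical constraints on $(a_0,\dotsc,a_5)$ and then carry out a finite enumeration. The crucial simplification is that Proposition \ref{prop:CorrectSings} already forces a very rigid shape: after reordering we may assume $a_0=a_1$, $a_2=a_3$ with $a_0,a_2$ odd and congruent to $1\pmod 4$, while $\gcd(a_4,a_5)=4$. Writing $a_4=4p$, $a_5=4q$ with $\gcd(p,q)=1$, the Calabi--Yau condition $d=\sum_i a_i$ becomes
\begin{equation*}
d=2a_0+2a_2+4p+4q,
\end{equation*}
and the divisibility requirements $a_4\mid d$, $a_5\mid d$ translate into $2p\mid a_0+a_2+2q$ and $2q\mid a_0+a_2+2p$. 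These two relations, together with $\gcd(p,q)=1$, bound $p$ and $q$ in terms of $a_0+a_2$, and in turn bound $a_0,a_2$ once the well-formedness conditions of Proposition \ref{prop:GenericHypersurfaceWellFormed} are imposed. This reduces the problem to searching within a finite, explicit box.

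Next I would impose quasismoothness using the theorem of Fletcher quoted above. For each candidate sextuple $(a_0,\dotsc,a_5)$ surviving the arithmetic constraints, one must verify the monomial conditions (1)--(2) of that theorem for every nonempty subset $\{i_0,\dotsc,i_k\}\subset\{0,\dotsc,5\}$ against the degree $d=\sum_i a_i$; this is a finite mechanical check. Simultaneously one verifies well-formedness of both the ambient weighted projective space and the hypersurface, using Proposition \ref{prop:GenericHypersurfaceWellFormed}. The outcome is a manageable preliminary list of candidate families.

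For condition (iv), I would enumerate the singularities of $Y_d$ by stratifying via the subsets $S_I$: at a generic point of $Y_d\cap S_I$ (when nonempty) the singularity type $\frac{1}{m}(c_1,c_2,c_3,c_4)$ is read off directly from the weights $a_j$ for $j\notin I$ modulo $m=\gcd_{i\in I}a_i$, exactly as in the example $\bbC\bbP^3_{1,2,3,6}$. Excluding the isolated $\frac{1}{4}(1,1,1,1)$ points already accounted for, I would apply Theorem \ref{thm:neccrepres}: compute the age-$1$ elements of the defining cone $\sigma\subset N_\bbQ$ and test whether they minimally generate $\sigma$ over $\bbZ$. Any singularity failing this test kills the candidate; for the orders of cyclic group that occur in our bounded list, the necessary condition is also sufficient by the Dais--Henk--Ziegler results cited before Theorem \ref{thm:neccrepres}, so we obtain a definitive answer.

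I expect the main obstacle to be less the logic than the bookkeeping: the combined constraints generate a sizeable but finite case list, and for each case the crepant-resolution check for higher-dimensional strata (curves and surfaces of singularities, not just isolated points) requires careful toric computation of age-$1$ lattice points, together with an honest verification in the ambiguous borderline orders flagged in \cite{dais2006existence}. The enumeration itself is elementary once the bounds from the divisibility conditions $2p\mid a_0+a_2+2q$ and $2q\mid a_0+a_2+2p$ are in place, and the output is precisely Table \ref{tab:AdmissableWeights}.
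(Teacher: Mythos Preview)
Your overall strategy---translate (i)--(iii) into arithmetic constraints on the weights via Propositions \ref{prop:GenericHypersurfaceWellFormed} and \ref{prop:CorrectSings}, enumerate, then eliminate candidates whose extra singularities fail the age-$1$ test of Theorem \ref{thm:neccrepres}---is exactly the strategy the paper uses. The handling of condition (iv) is also the same.

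There is, however, a genuine gap in your finiteness argument. You claim that the divisibility relations $2p\mid a_0+a_2+2q$, $2q\mid a_0+a_2+2p$ together with well-formedness ``bound $a_0,a_2$'' and hence produce a finite box. This is false: take $a_0=a_1=1$, $a_2=a_3=N$ for any $N\equiv 1\pmod 4$, and $a_4=a_5=4$. Then $d=2N+10$, both divisibility conditions hold, $\gcd(p,q)=1$, and all the well-formedness conditions of Proposition \ref{prop:GenericHypersurfaceWellFormed} are satisfied---yet $N$ is unbounded. What actually cuts this infinite family down is \emph{quasismoothness}: for the singleton $\{2\}$ one needs $N\mid d$ or $N\mid d-a_j$ for some $j$, i.e.\ $N\mid 10$ or $N\mid 10-a_j$, which fails for $N>10$. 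So the finiteness lives in condition (i), not in well-formedness, and establishing it directly from Fletcher's criterion is not as elementary as you suggest.

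The paper sidesteps this entirely by taking as input the list of Lynker et al.\ \cite{lynker1999landau} of all $1{,}100{,}055$ weight systems in dimension $5$ for which the generic degree-$d$ hypersurface is quasismooth; finiteness is then a (nontrivial) fact about that classification rather than something proved in the paper. A computer search over that list with the numerical conditions from Propositions \ref{prop:GenericHypersurfaceWellFormed} and \ref{prop:CorrectSings} yields $18$ candidates, and Theorem \ref{thm:neccrepres} eliminates those whose remaining singularities lack crepant resolutions, leaving Table \ref{tab:AdmissableWeights}. Your approach can be repaired by either citing the Lynker list for finiteness, or by extracting explicit bounds from the quasismoothness monomial conditions before enumerating---but as written, the ``finite, explicit box'' is not established.
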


\begin{proof}
  Lynker et al. \cite{lynker1999landau} determined the complete set of weighted projective spaces of dimension $5$ such that the generic hypersurface of degree $d=\sum_ia_i$ is quasismooth. 
  The list of weights can be found at \url{http://thp.uni-bonn.de/Supplements/cy.html}.

  Propositions \ref{prop:GenericHypersurfaceWellFormed} and \ref{prop:CorrectSings} translate conditions (i)--(iii) into numerical conditions on the weights $a_0,\dotsc,a_5$.
%
  We use a computer programme to search the list of 1,100,055 sets of weights given by Lynker to get a list of $18$ sets of weights, for which conditions (i)--(iii) apply.

   Then we use Theorem \ref{thm:neccrepres} to test whether any undesired singularities of the generic hypersurface admit crepant resolutions. 
   This test eliminates the weights which are not listed in Table \ref{tab:AdmissableWeights}. 
\end{proof}
\begin{table}[htbp]
\begin{tabular}{|r r r r r r| r r r r r r|}
    \hline
    \multicolumn{12}{|c|}{$\{a_0,\dotsc,a_5\}$} \\
    \hline
    1& 1& 1& 1& \phantom{2}4& 4 & 
\phantom{2}1& \phantom{2}1& 1& 1& 4& 8 \\
1& 1& 1& 1& 8& 12 & 
1& 1& 5& 5& 8& 20 \\
\hline
1& 1& 9& 9& 4& 4 &
5& 5& 13& 13& 4& 4 \\
1& 1& 13& 13& 4& 8 &
1& 1& 21& 21& 4& 16 \\
5& 5& 25& 25& 4& 16 &
1& 1& 21& 21& 12& 28 \\
1& 1& 37& 37& 8& 28 &
1& 1& 53& 53& 20& 32 \\
21& 21& 49& 49& 4& 24 &
1& 1& 69& 69& 16& 52 \\
\hline
\end{tabular}
\bigskip
\caption{\label{tab:AdmissableWeights}The admissable weights of the ambient weighted projective spaces of Calabi--Yau 4-orbifolds. The weighted projective spaces with weights listed in the first two rows appear as ambient spaces for Calabi--Yau 4-orbifolds in \cite{Joyce:1996st}.}
\end{table}

\section{Determining Betti Numbers of $M$}

Let us now suppose that $Y$ is a Calabi--Yau 4-orbifold contained in one of the weighted projective spaces we have determined in Proposition \ref{prop:AdmissableWeights}.
In general $Y$ will have singularities, which we first need to resolve.
We will denote the resolution of $Y$ by $\hat{Y}$ and let us assume that $\tau$ lifts to $\hat{Y}$ so that $\hat{Y}$ satisfies Condition \ref{cond:Y}.

For the moment let us assume that we can determine the Hodge numbers of $\hat{Y}$. We can determine the Betti numbers of $Z=\hat{Y}/\langle \tau\rangle$ and the resulting $\mathrm{Spin}(7)$-manifold $M$ as follows.

\begin{prop}
  \label{prop:BettiNumbersofMintermsofZ}
  Let $\hat{Y}$, $Z$, $M$ be as above.
  Suppose $Z$ has $k$ singularities modelled on $\bbR^8/G$ as in Section \ref{sec:ReviewofConstruction}.
  Then the Betti numbers of $M$ are
\begin{align*}
b^2(M)&=b^2(Z), & b^4_+(M)&=\frac{1}{2}(h^{2,2}(\hat{Y})+k)-b^2(Z)+1,\\
b^3(M)&=\frac{1}{2}b^3(\hat{Y}), & b^4_-(M)&=h^{3,1}(\hat{Y})+b^2(\hat{Y})-b^2(Z)+k-1. 
\end{align*}
\end{prop}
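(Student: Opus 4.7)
The plan is a two-step reduction: first express $b^j(Z)$ in terms of the $\tau$-invariant cohomology of $\hat Y$, then express $b^j(M)$ in terms of $b^j(Z)$ via a Mayer--Vietoris argument for the resolution of the $k$ isolated singularities of $Z$ by ALE $\mathrm{Spin}(7)$-manifolds $X_i$. The $b^4_\pm$-splitting is then obtained by refining along the $\mathrm{Spin}(7)$-decomposition of $\Lambda^4 T^*M$.

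For the first step, since $\tau$ has only (isolated, orbifold) fixed points on $\hat Y$, we have $H^j(Z;\bbR) \cong H^j(\hat Y;\bbR)^\tau$. Being antiholomorphic, $\tau^*$ swaps $H^{p,q}$ with $H^{q,p}$: for $p\neq q$ the invariant subspace of $H^{p,q}\oplus H^{q,p}$ has real dimension $h^{p,q}$, and for $p=q$ we write $h^{p,p} = h^{p,p}_+ + h^{p,p}_-$ for the $\pm 1$-eigenspaces of $\tau^*|_{H^{p,p}}$. Using $h^{2,0}(\hat Y)=0$ from Condition~\ref{cond:Y} and $h^{4,0}(\hat Y)=1$ from the Calabi--Yau property,
\begin{align*}
 b^2(Z) &= h^{1,1}_+(\hat Y), \\
 b^3(Z) &= h^{3,0}(\hat Y) + h^{2,1}(\hat Y) = \tfrac12 b^3(\hat Y), \\
 b^4(Z) &= 1 + h^{3,1}(\hat Y) + h^{2,2}_+(\hat Y).
\end{align*}
A Lefschetz fixed-point calculation for $\tau$ on the orbifold $\hat Y$, with each $p_j$ contributing a prescribed local term, expresses $h^{2,2}_+(\hat Y)$ in terms of $h^{2,2}(\hat Y)$, $k$, $b^2(\hat Y)$ and $b^2(Z)$.

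For the second step, apply Mayer--Vietoris to $M = (Z\setminus\{p_1,\dots,p_k\}) \cup \bigsqcup_i X_i$ glued along $\bigsqcup_i S^7/G$. Since $G\subset\mathrm{Spin}(7)\subset\mathrm{SO}(8)$ acts freely and orientation-preservingly on $S^7$, the link is a rational homology 7-sphere; hence in degrees $2\le j\le 6$ the sequence collapses to $b^j(M) = b^j(Z) + \sum_i b^j(X_i)$. Joyce's Betti-number computation for the two ALE $\mathrm{Spin}(7)$-resolutions in \cite[\S 3]{Joyce:1999fk} gives $b^1(X_i)=b^2(X_i)=b^3(X_i)=0$ and $b^4(X_i)=1$, yielding $b^2(M)=b^2(Z)$, $b^3(M)=\tfrac12 b^3(\hat Y)$, and $b^4(M) = b^4(Z)+k$.

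The main obstacle is the splitting $b^4(M) = b^4_+(M)+b^4_-(M)$ under the $\mathrm{Spin}(7)$-structure, which requires refining along $H^4 = H^4_1\oplus H^4_7\oplus H^4_{27}\oplus H^4_{35}$ with $b^4_+ = b^4_1+b^4_7+b^4_{27}$ and $b^4_- = b^4_{35}$. Under the embedding $\mathrm{SU}(4)\hookrightarrow\mathrm{Spin}(7)$ of Proposition~\ref{prop:cyisspin7}, the Hodge decomposition refines the $\mathrm{Spin}(7)$-decomposition in a standard way determined by how $\omega$, $\theta$ and their wedge products split into $\mathrm{Spin}(7)$-irreducibles: for instance $[\omega^2]$ and $[\mathrm{Re}\,\theta]$ are of type $\Lambda^4_1$, classes $\omega\wedge\gamma$ for primitive $\gamma\in H^{1,1}$ are self-dual, and primitive $(3,1)$-classes and primitive $(2,2)$-classes split in a known way between $\Lambda^4_{27}\subset\Lambda^4_+$ and $\Lambda^4_{35}=\Lambda^4_-$. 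I would identify each $\tau$-invariant Hodge class on $\hat Y$ and each exceptional class from an $X_i$ with its $\mathrm{Spin}(7)$-type, taking care that $\tau^*\omega=-\omega$ so the K\"ahler class sits in $H^{1,1}_-$ (which produces the $-b^2(Z)$ correction terms), and that the ALE $H^4$-classes are self-dual for Joyce's compatible choice of resolution. Substituting the Lefschetz expression for $h^{2,2}_+(\hat Y)$ and checking against the identities $b^4_+(M)+b^4_-(M) = b^4(M)$ and $b^4_+(M)-b^4_-(M) = \sigma(M)$ then gives the stated formulas.
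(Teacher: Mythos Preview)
Your two-step reduction (compute $b^j(Z)$ from $\tau$-invariant Hodge data on $\hat Y$, then pass to $M$ using the ALE resolutions, with Lefschetz supplying $h^{2,2}_\tau$) is exactly the paper's argument. The one concrete error is in the last step: you assert that the ALE $H^4$-classes are self-dual, but Joyce's ALE $\mathrm{Spin}(7)$-resolutions of $\bbR^8/G$ have $b^4_+=0$ and $b^4_-=1$, so the exceptional class is \emph{anti}-self-dual. With your sign you would obtain $b^4_+(M)=b^4_+(Z)+k$ and $b^4_-(M)=b^4_-(Z)$, which does not reproduce the stated formulae; the $k$ that appears in $b^4_+(M)=\tfrac12(h^{2,2}+k)-b^2(Z)+1$ enters only through the Lefschetz substitution for $h^{2,2}_\tau$, while the ALE pieces contribute the explicit $+k$ to $b^4_-(M)$.

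It is also worth noting that the paper avoids your detour through the full $\mathrm{Spin}(7)$ decomposition $\Lambda^4=\Lambda^4_1\oplus\Lambda^4_7\oplus\Lambda^4_{27}\oplus\Lambda^4_{35}$ on $M$. Since $Z$ already carries a torsion-free $\mathrm{Spin}(7)$-structure as an orbifold, one can read off $b^4_\pm(Z)$ directly from the $\tau$-invariant Hodge numbers of $\hat Y$ via the standard relation between Hodge type and the Hodge star on a Calabi--Yau $4$-fold (yielding $b^4_+(Z)=h^{2,2}_\tau-h^{1,1}+h^{1,1}_\tau+2$ and $b^4_-(Z)=h^{3,1}+h^{1,1}-h^{1,1}_\tau-1$), and then simply add the ALE contributions. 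This is both shorter and less error-prone than assembling the $\mathrm{Spin}(7)$-types class by class on $M$; in particular your heuristic that ``$\tau^*\omega=-\omega$ produces the $-b^2(Z)$ correction'' is not quite how that term arises---it comes out of the Lefschetz elimination of $h^{2,2}_\tau$ rather than from the Lefschetz $\omega$-action on cohomology.
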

\begin{proof}
  
  Let $h^{p,p}_\tau(\hat{Y})$ be the dimension of the $\tau$-invariant part of $H^{p,p}(\hat{Y})$.
  Noting that in all of the cases we will discuss we have $h^{2,0}(\hat{Y})=h^{3,0}(\hat{Y})=0$ and $h^{4,0}(\hat{Y})=1$ since $\hat{Y}$ has $\Hol(\hat{Y})=SU(4)$,
  the Betti numbers of $Z$ can then be expressed as
  \begin{align*}
 b^2(Z)&=h^{1,1}_\tau(\hat{Y}), & b^4_+(Z)&=h^{2,2}_\tau(\hat{Y})-h^{1,1}(\hat{Y})+h^{1,1}_\tau(\hat{Y})+2, \\
    b^3(Z)&=h^{2,1}(\hat{Y}), & b^4_-(Z)&=h^{3,1}(\hat{Y})+h^{1,1}(\hat{Y})-h^{1,1}_\tau(\hat{Y})-1.
  \end{align*}

  Applying the Lefschetz fixed point theorem we find that
 \begin{equation*}
   k=2+4h^{1,1}_\tau(\hat{Y})-2h^{1,1}(\hat{Y})+2h^{2,2}_\tau(\hat{Y})-h^{2,2}(\hat{Y}), 
 \end{equation*}
 which we use to eliminate $h^{2,2}_\tau(\hat{Y})$ from the expressions for the Betti numbers of $Z$. 
  
 The ALE $\mathrm{Spin(7)}$ manifolds that we use to resolve the quotient singularities of $Z$ have Betti numbers $b^1=b^2=b^3=b^4_+=0$ and $b^4_-=1$ hence the Betti numbers of $M$ satisfy
  \begin{equation*}
    \begin{split}
    b^j(M)&=b^j(Z) \text{ for $j=1$, $2$, $3$} \\
    b^4_+(M)&= b^4_+(Z) \text{ and } b^4_-(M)=b^4_-(Z)+k.
  \end{split}
\end{equation*}

Combining these facts gives us the result.
\end{proof}

From Proposition \ref{prop:BettiNumbersofMintermsofZ} we see that to determine the Betti numbers of $M$ it suffices to know the Hodge numbers of the orbifold $\hat{Y}$ and to understand how $\tau$ acts on $H^{1,1}(\hat{Y})$. 
We will use techniques from toric geometry both to determine the Hodge numbers and to understand the action of $\tau$. 

The rest of this section will almost entirely be material from \cite{batyrev1994dual}, and we direct the reader to this paper for more details.

%
%

\subsection{Lattice Polytopes}
We will now change our viewpoint from weighted projective spaces to toric varieties associated to reflexive polytopes.
With this change we will find the Hodge numbers of the resolved hypersurface $\hat{Y}$, show that the antiholomorphic involution $\tau$ lifts to $\hat{Y}$, and determine the dimension of $H^2_\tau(\hat{Y})$.

Batyrev and Cox \cite{batyrev1994dual,batyrev1996strong} have determined the Hodge numbers of the crepant resolutions of Calabi--Yau hypersurfaces in toric varieties associated to reflexive polytopes.
We will give the definition of reflexive polytope and show how to associate a toric variety to such a polytope.

In this subsection $N$ will denote a lattice and $M=\Hom(N,\bbZ)$ its dual.
We will denote a fan by $\Sigma$ and  a rational strongly convex cone by $\sigma$.
Let $\Delta\subset M$ be an $n$-dimensional lattice polytope, i.e. a polytope with vertices in $M$, and suppose $\Delta$ contains the origin. We associate a toric variety to $\Delta$ by taking cones over the maximal faces of $\Delta$ as described in the following proposition.
\begin{prop}
    For every $k$-dimensional face $\Theta\subset \Delta$ let $\check{\sigma}(\Theta)\subset M_\bbQ=M\otimes_\bbZ\bbQ$ be the cone over $\Theta$, $\check{\sigma}(\Theta)=\{\lambda x\in M_\bbQ\text{ : $x\in \Theta$ and $\lambda \in \bbQ$}\}$, and let $\sigma(\Theta)\subset N_\bbQ$ be the $(n-k)$-dimensional dual cone. Then the collection of dual cones $\Sigma(\Delta)=\{\sigma(\Theta)\text{ : $\Theta\subset\Delta$}\}$ is a fan and hence determines a toric variety $P_\Delta$.
\end{prop}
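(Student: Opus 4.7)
The plan is to recognise $\Sigma(\Delta)$ as the standard \emph{normal fan} construction associated to a polytope $\Delta$ with $0$ in its interior (which is the case relevant later, when $\Delta$ will be reflexive). The proof then reduces to checking the three axioms defining a fan: each $\sigma(\Theta)$ is a strongly convex rational polyhedral cone of the asserted dimension, every face of $\sigma(\Theta)$ is again of the form $\sigma(\Theta')$, and any pairwise intersection $\sigma(\Theta_1)\cap\sigma(\Theta_2)$ is a face of each.

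For the first axiom, I would note that since $\Theta$ has finitely many lattice vertices $v_1,\dots,v_r\in M$, the cone $\check\sigma(\Theta)=\bbQ_{\geq 0}v_1+\dots+\bbQ_{\geq 0}v_r$ is a rational polyhedral cone in $M_\bbQ$ whose linear span is $\mathrm{aff}(\Theta\cup\{0\})$, of dimension $k+1$ for a proper face (using $0\in\mathrm{int}(\Delta)$, hence $0\notin\Theta$). Duality preserves rational polyhedrality, and the dimension count $\dim\sigma(\Theta)=n-k$ comes out cleanest by identifying $\sigma(\Theta)$ with the inner normal cone $\{v\in N_\bbQ : \langle v,x\rangle\leq\langle v,y\rangle\text{ for all }x\in\Theta,\,y\in\Delta\}$, whose dimension equals the codimension of $\mathrm{aff}(\Theta)$ in $M_\bbQ$. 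Strong convexity is forced by $0$ lying in the interior of $\Delta$, which prevents $\check\sigma(\Theta)$ from being contained in any proper linear subspace that would contribute lineality to its normal cone.

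For the second axiom, I would exploit that $\Theta\mapsto\sigma(\Theta)$ is inclusion-reversing: if $\Theta\subseteq\Theta'$ then $\check\sigma(\Theta)\subseteq\check\sigma(\Theta')$, so $\sigma(\Theta')\subseteq\sigma(\Theta)$, and $\sigma(\Theta')$ is cut out from $\sigma(\Theta)$ by imposing the supporting hyperplane conditions dual to the extra generators of $\check\sigma(\Theta')$, making it a face. Polar duality between $\Delta$ and its face poset shows that every face of $\sigma(\Theta)$ arises this way.

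For the third axiom I would argue that $\sigma(\Theta_1)\cap\sigma(\Theta_2)=\sigma(\Theta)$, where $\Theta$ is the smallest face of $\Delta$ containing $\Theta_1\cup\Theta_2$ (well-defined because the face poset of a polytope is a lattice); conclusion then follows from the second step. This last step is the main obstacle, as verifying the intersection equality by direct face-by-face manipulation in $\Delta$ is cumbersome. The cleanest route is to identify $\Sigma(\Delta)$ with the normal fan of $\Delta$, equivalently the face fan of the polar dual polytope $\Delta^*$, and invoke the standard fan property of the normal fan of a polytope with $0$ in its interior (as in \cite{Fulton:1993fk}). With the fan $\Sigma(\Delta)$ in hand, $P_\Delta$ is defined as the associated toric variety by the usual gluing of the affine charts $U_{\sigma(\Theta)}=\mathrm{Spec}\,\bbC[\sigma(\Theta)^\vee\cap M]$.
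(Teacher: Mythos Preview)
The paper does not supply a proof of this proposition; it is stated without argument as a standard fact, essentially quoting the normal-fan construction from \cite{batyrev1994dual} (and implicitly \cite{Fulton:1993fk}). Your sketch is a correct outline of the standard proof and therefore goes beyond what the paper provides. Two remarks: your identification of $\sigma(\Theta)$ with the inner normal cone of $\Delta$ at $\Theta$ is the right way to read the statement---the paper's literal formula with $\lambda\in\bbQ$ and the phrase ``dual cone'' are loose, since the genuine polar dual of $\check\sigma(\Theta)$ would be full-dimensional rather than $(n-k)$-dimensional---and you are right that strong convexity of the cones requires $0\in\mathrm{int}(\Delta)$, which is the only case the paper actually uses (reflexive $\Delta$).
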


\begin{exmp}
We recall how weighted projective space can be constructed as a toric variety.
Let $a_0,\dotsc,a_n$ be positive integers with $\gcd(a_0,\dotsc,a_n)=1$.
Let $\overline{N}$ be generated by $e_0,\dotsc,e_{n}$ and let $N=\overline{N}/\bbZ\cdot(a_0e_0+\dotsb+a_ne_n)$.
$N$ is a lattice since $\gcd(a_0,\dotsc,a_n)=1$.
$\bbC\bbP^n_{a_0,\dotsc,a_n}$ is the toric variety associated to the the fan whose $n$ dimensional cones are given by $\linspan(e_0,\dotsc,e_{i-1},e_{i+1},\dotsc,e_n)$ for $i=0,\dotsc,n$ in $N_\bbQ$.
\end{exmp}

\begin{exmp} 

    Let $\overline{N}$ be generated by $e_0,\dotsc,e_n$ and let $\overline{M}$ be the dual lattice. Suppose $x\in \bbZ_{>0}e_0+\dotsb+\bbZ_{>0}e_n$ is a primitive element in $\overline{N}$, i.e. $x=a_0e_0+\dotsb+a_ne_n$ where $\gcd(a_0,\dotsc,a_n)=1$ and $a_i>0$ for each $i$. 
Then the set $\Delta=\{y\in \overline{M}\text{ : $\langle y,x\rangle=0$ and $\langle y,e_i\rangle\geq-1$}\}$ is an $n$-dimensional lattice polytope in the lattice $M=\{y\in\overline{M}\text{ : $\langle y,x\rangle=0$}\}$.

The dual lattice $N=\Hom(M,\bbZ)$ can be identified with the quotient $\overline{N}/\bbZ\cdot(a_0e_0+\dotsb+a_ne_n)$. If $\gcd(a_0,\dotsc,\widehat{a_i},\dotsc,a_n)=1$ for each $i$ so that $\bbC\bbP^n_{a_0,\dotsc,a_n}$ is well-formed, then the fan $\Sigma(\Delta)$ is a refinement of the fan of $\bbC\bbP^n_{a_0,\dotsc,a_n}$ so the toric variety $P_\Delta$ is a partial resolution of $\bbC\bbP^n_{a_0,\dotsc,a_n}$.

\end{exmp}

The previous example shows that to any weighted projective space we can associate a lattice polytope, and after applying the toric construction to the polytope we get a toric variety, which is a partial resolution of the original weighted projective space. 

A polytope $\Delta$ determines not only a toric variety $P_\Delta$ but also a choice of ample invertible sheaf $\mathcal{O}_\Delta(1)$ on $P_\Delta$ by \cite[Prop. 2.1.5]{batyrev1994dual}. 

\begin{defn}
Let $M$ be a lattice and $\Delta\subset M$ a lattice polytope of the same dimension as $M$ containing the origin.
We define the \emph{dual polytope} $\Delta^*\subset N_\bbQ$ as the set
\[
\Delta^*=\{x\in N_\bbQ\text{ : $\langle y,x\rangle\geq -1$ for all $y\in\Delta$}\}
\]
We say a lattice polytope $\Delta$ is \emph{reflexive} if $\Delta^*$ is also a lattice polytope, i.e. if the vertices of $\Delta^*$ lie in $N$ and not just $N_\bbQ$.
\end{defn}

The relevance of reflexive polytopes to Calabi--Yau orbifolds is described in the following result from \cite[Th. 4.1.9]{batyrev1994dual}.

\begin{thm}
  Let $\Delta$ be an integral polytope and $P_\Delta$ the corresponding projective toric variety. The following conditions are equivalent.
  \begin{enumerate}
    \item the ample invertible sheaf $\mathcal{O}_\Delta(1)$ on $P_\Delta$ is anticanonical;
    \item $\Delta$ is reflexive.
  \end{enumerate}
\end{thm}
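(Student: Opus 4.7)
The plan is to compute the torus-invariant divisor corresponding to $\mathcal{O}_\Delta(1)$, compare it with the anticanonical divisor of $P_\Delta$, and read off the reflexivity condition from the matching of coefficients.

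First I would recall the dictionary between polytopes and $T$-invariant divisors on $P_\Delta$. Since $\Delta\subset M_\bbQ$ contains the origin, each facet $F\subset \Delta$ lies in a hyperplane of the form $\{y\in M_\bbQ:\langle y,u_F\rangle=-a_F\}$ with $u_F\in N$ primitive and $a_F\in\bbZ_{>0}$; then $\Delta=\{y\in M_\bbQ:\langle y,u_F\rangle\geq -a_F\text{ for every facet } F\}$. By construction of $\Sigma(\Delta)$, the rays of $\Sigma(\Delta)$ are exactly $\bbR_{\geq 0}u_F$ as $F$ ranges over facets of $\Delta$, so the $T$-invariant prime divisors of $P_\Delta$ are the $D_F$, one per facet. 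The standard correspondence (see \cite[\S2]{batyrev1994dual}) identifies the polarisation $\mathcal{O}_\Delta(1)$ with the divisor
\[
D(\Delta)=\sum_{F\text{ facet}} a_F\,D_F.
\]

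Next I would invoke the standard formula for the canonical sheaf of a toric variety: $K_{P_\Delta}=-\sum_\rho D_\rho$, where the sum runs over all rays of $\Sigma(\Delta)$. Together with the previous step this gives
\[
\mathcal{O}_\Delta(1)\cong \mathcal{O}_{P_\Delta}(-K_{P_\Delta})\quad\Longleftrightarrow\quad a_F=1\ \text{for every facet } F\text{ of } \Delta,
\]
since the $T$-invariant divisors $D_F$ are linearly independent in the class group modulo the relations coming from $M$, and matching $T$-invariant representatives is enough.

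Finally I would translate the condition $a_F=1$ into a statement about $\Delta^*$. By definition the vertices of $\Delta^*$ are in bijection with the facets of $\Delta$, the vertex attached to $F$ being $u_F/a_F\in N_\bbQ$ (so that $\langle y,u_F/a_F\rangle\geq -1$ on $\Delta$ with equality on $F$). Since $u_F$ is primitive in $N$, the element $u_F/a_F$ lies in $N$ if and only if $a_F=1$. Hence $\Delta^*$ is a lattice polytope (i.e.\ $\Delta$ is reflexive) if and only if $a_F=1$ for every facet $F$, which by the previous step is equivalent to $\mathcal{O}_\Delta(1)$ being anticanonical.

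The only real bookkeeping is the identification of $\mathcal{O}_\Delta(1)$ with $\sum_F a_F D_F$; the rest is linear algebra in $N_\bbQ$ and the standard toric formula for $K_{P_\Delta}$.
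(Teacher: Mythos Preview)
The paper does not prove this theorem; it is quoted from \cite[Th.~4.1.9]{batyrev1994dual} and used as a black box. So there is no in-paper proof to compare against, and your sketch is essentially the standard toric argument that Batyrev gives.

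One step in your sketch is imprecise and, as written, is a small gap. You assert that ``the $T$-invariant divisors $D_F$ are linearly independent in the class group modulo the relations coming from $M$, and matching $T$-invariant representatives is enough.'' But the $D_F$ are \emph{not} independent in the class group: for every $m\in M$ one has $\sum_F\langle m,u_F\rangle D_F\sim 0$. Hence from $\sum_F a_F D_F\sim\sum_F D_F$ you only obtain $a_F-1=\langle m,u_F\rangle$ for some $m\in M$, not immediately $a_F=1$. The missing observation is that the origin lies in the \emph{interior} of $\Delta$, so $a_F\geq 1$ for every facet, giving $\langle m,u_F\rangle\geq 0$ for all facet normals $u_F$; since $\Delta$ is bounded the $u_F$ positively span $N_\bbQ$ (the normal fan is complete), which forces $m=0$ and therefore $a_F=1$ for every $F$. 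With this addition your argument is correct and matches the standard proof.
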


Now suppose $\Delta$ is reflexive. Then a generic section of the sheaf $\mathcal{O}_\Delta(1)$ determines a quasismooth Calabi--Yau hypersurface, by an application of the adjunction formula.

The lattice polytopes associated to weighted projective spaces as we have described above are not always reflexive.
For example the  weights 1, 1, 1, 1, 1, 2 do not determine a reflexive polytope.
Fortunately the lattice polytopes associated to the weights described in Proposition \ref{prop:AdmissableWeights} are all reflexive.

The Hodge numbers of the resolved hypersurface $\hat{Y}$ are given in terms of combinatorial properties of the lattice polytope $\Delta$ defined by the weights $a_0,\dotsc,a_5$. We will not give the formulae here but direct the reader to \cite{batyrev1994dual,batyrev1994hodge}.

\subsection{$\tau$-Equivariant resolutions of singularities}
If $Y\subset P_\Delta$ is a Calabi--Yau hypersurface then a crepant resolution of the singularities of $P_\Delta$ will induce a crepant resolution of the singularities of $Y$.
We will resolve the singularities of $P_\Delta$ and in the process desingularize all of the Calabi--Yau hypersurfaces.

A subdivision of the polytope $\Delta^*$ will determine a refinement of the fan of $P_\Delta$ and hence a partial resolution of $P_\Delta$.
A subdivision of $\Delta^*$ is called a \emph{(maximal) triangulation} if every lattice point in $\Delta^*$ is the vertex of some simplex in the subdivision.
A triangulation of $\Delta^*$ will determine a maximal partial crepant resolution of $P_\Delta$ and hence $Y$.
A triangulation is called \emph{projective} if the associated resolution is.
The existence of projective triangulations is guaranteed by \cite[Prop. 4]{Gelfand:1989fk}
 and the resulting resolution will, very importantly, be crepant by \cite[Th. 2.2.24]{batyrev1994dual}.


However we require that $\tau$ lifts to the resolution of $P_\Delta$, which we will denote by $\hat{P}_\Delta$.
The antiholomorphic involution $\tau$ can be decomposed into three parts
\begin{equation*}
    \tau=t\cdot \tau_m \cdot c,
\end{equation*}
where $c$ denotes the standard antiholomorphic involution, $\tau_m$ is an element of the torus in $P_\Delta$, and $t$ is a morphism of $P_\Delta$ induced by an involution of the lattice $N$, which fixes the polytope $\Delta^*$ (and $\Delta$).

The involution $\tau$ will lift to the resolution if the triangulation of $\Delta^*$ is invariant under the action of $t$, by which we mean that $t$ sends a simplex in the triangulation to another simplex in the triangulation.
Unfortunately we cannot always find such an invariant triangulation as the following example shows.

%
%
%

\begin{exmp}
    Let $e_0=(0,0,-2)$, $e_1=(1,1,1)$, $e_2=(1,-1,1)$, $e_3=(-1,-1,1)$, and $e_4=(-1,1,1)$ be points in $\bbR^3$. 
    Let $N$ be the lattice generated by $e_1,e_2,e_3$ and let $\Delta^*$ denote the reflexive polytope given by the convex hull of the set $\{e_0,\dotsc,e_4\}$.
    The polytope $\Delta^*$ can be pictured as a cone over a square with vertices $e_1,e_2,e_3,e_4$.
    Let $t$ be the lattice isomorphism defined by $t(e_1)=e_2$, $t(e_2)=e_1$ and $t(e_3)=e_4$, which is reflection in a plane in $\bbR^3$.

    There are two triangulations of the polytope $\Delta^*$.
    One contains the edge joining $e_1$ and $e_3$ and the other contains the edge joining $e_2$ and $e_4$. 
    Neither of these triangulations is invariant under the action of $t$ and we see that the map induced by $t$ on the toric variety does not lift to any resolution.
    It is interesting to note that the polytope is not simplicial and hence the associated toric variety is not an orbifold.
    We have not found an example of a simplicial reflexive polytope with an involution $t$, which does not admit a $t$-invariant projective triangulation.

\end{exmp}

%
%
%


Projective triangulations of the lattice polytope, $\Delta^*$, are in one-to-one correspondence with the faces of an associated polytope, known as the secondary polytope \cite[Ch. 7]{Gelfand:1994fk}.
For $\Delta^*$ to admit a $t$-invariant triangulation, we require that $t$ must fix a face of the secondary polytope, or equivalently the fixed point set of $t$ intersects the interior of a face of the secondary polytope.
The secondary polytope sits inside the vector space $A_{n-1}(\hat{P}_\Delta)\otimes \bbQ$, where $\hat{P}_\Delta$ is any maximal partial crepant resolution of $P_\Delta$.
Recall that $A_{n-1}(\hat{P}_\Delta)$ is determined by an exact sequence \cite{Fulton:1993fk}
\begin{equation*}
    \label{eq:DefOfChowGroup}
    0\longrightarrow M\longrightarrow \sum_{\rho\in\Delta^*\setminus{\{0\}}} \bbZ\cdot e_\rho\longrightarrow A_{n-1}(\hat{P}_\Delta)\longrightarrow 0,
\end{equation*}
where $m\in M\mapsto \sum_{\rho\in\Delta^*\setminus{\{0\}}}\langle m,\rho\rangle e_\rho$.
The lattice isomorphism $t$ acts on $M$ and $\Delta^*$ and hence on $A_{n-1}(\hat{P}_\Delta)$.
If we tensor this sequence with $\bbQ$ we can check that for all of the cases we are interested in, $t$ in fact fixes the whole secondary polytope.
This means that $\tau$ will lift to any maximal crepant resolution of $P_\Delta$.


%
%

\subsection{Hodge Numbers of $Y$}

The Hodge numbers of the resolved Calabi--Yau hypersurface, $\hat{Y}$, are given by formulae presented in \cite{batyrev1994dual,batyrev1996strong}.
To understand how $\tau$ acts on $H^{1,1}(\hat{Y})$ we will describe in more detail how $h^{1,1}(\hat{Y})$ is calculated. 

The basic idea is that $h^{1,1}(\hat{Y})$ counts the components of the intersection of the resolution with the union of all irreducible toric divisors in the desingularization of $\bbC\bbP^n_{a_0,\dotsc,a_n}$.
If the resolution intersects a divisor with dimension $>0$, then it is irreducible, so the action of $\tau$ on this component can be determined by whether $\tau$ fixes the toric divisor or swaps it with another.

We denote the torus in $\hat{P}_\Delta$ by $T$ and let $X$ denote the intersection of $\hat{Y}$ with the union of all irreducible $T$-invariant divisors in $\hat{P}_\Delta$.
We have a short exact sequence of cohomology groups 
\begin{equation*}
    0\longrightarrow H^6_c(\hat{Y})\longrightarrow H^6_c(X)\longrightarrow H^7_c(\hat{Y}\setminus X)\longrightarrow 0,
\end{equation*} 
which is natural under $\tau$.
By Poincar\'e duality we have that $b^2(\hat{Y})=\dim(H^6_c(\hat{Y}))$ and hence $b^2_\tau(\hat{Y})$ is the difference of the dimension of the $\tau$-invariant parts of $H^6_c(X)$ and $H^7_c(\hat{Y}\setminus X)$.
Using a Lefchetz-type theorem \cite{Danilov:1986uq} for affine hypersurfaces in algebraic tori we have that $H^7_c(\hat{Y}\setminus X)\simeq H^9_c(T)$ is an isomorphism.
Given the description of $\tau$ as in Eq. \eqref{eq:ActionofSigma}, it is easy to check that the dimension of the $\tau$-invariant part of $H^9_c(T)$ is 3.

The irreducible $T$-invariant divisors in $\hat{P}_\Delta$ are indexed by the set $\Delta^*\setminus{\{0\}}$.
Let $\rho\in \Delta^*\setminus{\{0\}}$ and $D_\rho$ the corresponding $T$-invariant divisor.
If $\rho$ is contained in the interior of a face of codimension $\geq 3$, then the intersection $D_\rho\cap \hat{Y}$ is irreducible, while if $\rho$ is contained in the interior of a face of codimension $2$, then $D_\rho$ intersects $\hat{Y}$ in isolated points, the number of which is determined by the polytope.

If $D_\rho\cap \hat{Y}$ is irreducible, then the action of $\tau$ is determined by whether $\tau$ fixes $\rho\in \Delta^*\setminus{\{0\}}$ or not.
If $\tau$ fixes $\rho$, then $\rho$ does not contribute to $b^2_\tau(\hat{Y})$ and a pair $\rho_1,\rho_2$ of $T$-invariant divisors, which are swapped by $\tau$, contribute 1 to $b^2_\tau(\hat{Y})$.

If $D_\rho$ intersects $\hat{Y}$ in $d$ points, then for a generic $Y$, $\tau$ will swap $d/2$ pair of points if $d$ is even and $(d-1)/2$ pairs of points if $d$ is odd.
However we can choose $Y$ so that $\tau$ swaps $k$ pairs of points where $0\leq k\leq d/2$, in which case this contributes $k$ to $b^2_\tau(\hat{Y})$.
In this way we see that we can get $\mathrm{Spin(7)}$-manifolds with different topological invariants arising from the same family of Calabi--Yau 4-orbifolds.


We have used the software PALP \cite{kreuzer2004palp} to find the toric divisors and to determine the toric divisors fixed by $\tau$.

\begin{exmp}
    Consider the reflexive polytope with weights $1,1,9,9,4,4$ $\Delta\subset M$. Let $N=M^*$ and $\Delta^*$ the dual polytope of $\Delta$.
    The points of $\Delta^*\setminus\{0\}$ correspond to toric divisors in $P_\Delta$. 
   There are exactly 11 of these, which are listed in Table \ref{tab:ToricDivisorsExample} with respect to a particular basis of $N$. 
   The antiholomorphic involution swaps the elements in the first column in pairs and leaves the other 7 invariant.  
   \begin{table}[htbp]
       \begin{tabular}{l l l l}
           $(\phantom{-}1, \phantom{-}0, \phantom{-}0, \phantom{-}0, \phantom{-}0)$, & $(0, \phantom{-}0, \phantom{-}0, \phantom{-}1, \phantom{-}0)$, & $(0, -5, -5, -2, -2)$, \\
           $(\phantom{-}0, \phantom{-}1, \phantom{-}0, \phantom{-}0, \phantom{-}0)$, & $(0, \phantom{-}0, \phantom{-}0, \phantom{-}0, \phantom{-}1)$, & $(0, -3, -3, -1, -1)$, \\
           $(\phantom{-}0, \phantom{-}0, \phantom{-}1, \phantom{-}0, \phantom{-}0)$, & $(0, -7, -7, -3, -3)$, & $(0, -2, -2, -1, -1)$, \\
           $(-1, -9, -9, -4, -4)$, & $(0, -1, -1, \phantom{-}0, \phantom{-}0)$, &
   \end{tabular}
   \bigskip
   \caption{\label{tab:ToricDivisorsExample} $T$-invariant divisors in a maximal partial crepant resolution of the reflexive polytope with weights $1,1,9,9,4,4$.}
   \end{table}
   
   There is one divisor contained in an interior of a face with codimension 2, which corresponds to the singularities of our Calabi--Yau hypersurface of type $\frac{1}{4}(1,1,1,1)$.
   A generic hypersurface intersects the divisor in 7 points.
   We can choose the hypersurface so that $\tau$ swaps $k$ pairs of points where $0\leq k\leq 3$ and fixes the remaining singular points.
   After taking the quotient by $\tau$ and resolving the quotient singularities in the usual way, we can find $\mathrm{Spin}(7)$ manifolds with $0\leq b^2(M)\leq 3$.
   
\end{exmp}

\section{Results and further work}

Table \ref{tab:results} lists the examples of $\mathrm{Spin}(7)$-manifolds constructed from well-formed quasismooth hypersurfaces in weighted projective spaces. We include the examples already given in \cite[Ch. 15]{joyce2000compact} for the sake of completeness, which appear as the first four rows.

\begin{table}[tbp]
\begin{tabular}{| r r r r r r | c|  c| c| c| }
\hline
\multicolumn{6}{|c|}{$\{a_0,\dotsc,a_5\}$} & $b^2$ & $b^3$ & $b^4_+$ & $b^4_-$ \\
\hline
1&1&1&1&4&4&$0\leq k\leq 1$ & 0 & $1639-k$ & $807-k$ \\
1&1&1&1&4&8& 0 & 0 & 3175 & 1575 \\
1&1&1&1&8&12& 0 & 0 & 7784 & 3879 \\
1&1&5&5&8&20& 0 & 6 & 2493 & 1237 \\
\hline
1&1&9&9&4&4&$0\leq k\leq 3$ & 0 & $1415-k$ & $695-k$ \\
5&5&13&13&4&4&$0\leq k\leq 5$ & 0 & $295-k$ & $135-k$ \\
1&1&13&13&4&8& $0\leq k\leq 2$ & 0 & $983-k$ & $1991-k$ \\
1&1&21&21&4&16& $0\leq k\leq 1$ & 0 & $3927-k$ & $1951-k$\\
5&5&25&25&4&16& $0\leq k\leq 2$ & 0 &$487-k$ & $231-k$\\
1&1&21&21&12&28& $0\leq k\leq 2$ & 0 &$2983-k$&$1479-k$\\
1&1&37&37&8&28& 0 & 0 & 5911 & 2943 \\
1&1&53&53&20&32&0 & 0 & 6055&3015 \\
21&21&49&49&4&24&$0\leq k\leq 9$& 0 &$263-k$&$119-k$ \\
1&1&69&69&16&52&0 & 0 & 10087 & 5031 \\
\hline
\end{tabular}
\bigskip
\caption{\label{tab:results}The weights of the ambient weighted projective spaces of the Calabi--Yau 4-folds and Betti numbers of the resulting $\mathrm{Spin}(7)$-manifolds.}
\end{table}

The sets of Betti numbers realised by manifolds constructed in this thesis are all distinct from those of compact manifolds with holonomy $\mathrm{Spin}(7)$ already known.
Also it should be noted that the example with $b^4=15118$ and $b^4_-=5031$ has the largest known value of $b^4$ or $b^4_-$ for a compact manifold with holonomy $\mathrm{Spin}(7)$.

For a fixed weighted projective space if we consider the family of Calabi--Yau 4-folds, which are fixed by the antiholomorphic involution $\tau$, it is interesting to note that the resulting $\mathrm{Spin}(7)$-manifolds coming from resolutions of different Calabi--Yau 4-folds in the family all have $b^4_-+b^2+1$ constant, which is the dimension of the Conformal Field Theory moduli space \cite{shatashvili1995superstrings}.

The next step for looking for Calabi--Yau orbifolds satisfying Condition \ref{cond:Y} would be to look for hypersurfaces in toric varieties coming from reflexive polytopes, which is invariant under an involution of the lattice. 
One could also look for orbifolds with more general types of singularities, which admit resolutions with holonomy $\mathrm{Spin}(7)$, for example the singularities in \cite[Sec. 4.3]{Joyce:1999fk}.
The methods we have described for calculating the Betti numbers would immediately apply. 

It should be noted that there is a notion of mirror symmetry for Calabi--Yau hypersurfaces in toric varieties determined by reflexive polytopes. 
The mirror polytope will also admit a non-standard antiholomorphic involution but there will be a choice involved. 
Also the mirror Calabi--Yau will, in general, not have the type of singularities we described in Section \ref{sec:ReviewofConstruction}.

\bibliographystyle{abbrv}
\bibliography{transfer}

\end{document}